\documentclass{article}
\usepackage[english]{babel}
\usepackage{graphicx}
\usepackage{amsmath}
\usepackage{geometry}
\usepackage{accessibility}
\usepackage{xcolor}
\usepackage{dsfont}
\usepackage{amssymb}
\usepackage{amsthm}
\usepackage{mathtools}
\usepackage{dsfont}
\usepackage[textsize=tiny]{todonotes}
\usepackage{esvect}
\usepackage{authblk}

\newtheorem{theorem}{Theorem}[section]
\newtheorem*{theorem*}{Theorem}
\newtheorem{lemma}[theorem]{Lemma}

\newtheorem{definition}[theorem]{Definition}

\theoremstyle{remark}
\newtheorem*{remark*}{Bemerkung}
\newtheorem{remark}[theorem]{Remark}
\theoremstyle{corollary}
\newtheorem{corollary}[theorem]{Corollary}
\newtheorem{example}[theorem]{Example}
\newtheorem*{example*}{Example}

\newcommand{\indic}{\mathds{1}}
\newcommand{\PP}{\mathbb{P}}

\newcommand{\hP}{\hat{P}}
\newcommand{\hY}{\hat{Y}}

\newcommand{\hpi}{\hat{\pi}}
\newcommand{\NN}{\mathbb{N}}

\newcommand{\be}{\Bar{e}}
\newcommand{\vE}{\vv{E}}
\newcommand{\bE}{E}
\newcommand{\bpi}{\Bar{\pi}}
\newcommand{\bK}{\Bar{K}}
\newcommand{\vP}{\vv{P}}
\newcommand{\vpi}{\vv{\pi}}

\newenvironment{nalign}{
    \begin{equation}
    \begin{aligned}
}{
    \end{aligned}
    \end{equation}
    \ignorespacesafterend
}
\numberwithin{equation}{section}

\usepackage[ style=numeric, giveninits=true, eprint=false, url=false,isbn=false,  language=english, backend=biber
]{biblatex}
\renewbibmacro{in:}{
	\ifentrytype{article}
	{}
	{\printtext{\bibstring{in}\intitlepunct}}}
\AtEveryBibitem{\clearfield{month}}
\AtEveryBibitem{\clearfield{day}}

\usepackage[hidelinks]{hyperref}

\title{node2vec or triangle-biased random walks: stationarity,\\ regularity \& recurrence}
\author[1]{Luca Avena}
\author[1]{Gianmarco Bet}
\author[2]{Lars Schroeder}
\author[2]{Clara Stegehuis}
\affil[1]{Universit\`a degli Studi di Firenze, Italy}
\affil[2]{University of Twente, Netherlands}

\begin{document}

\maketitle

\begin{abstract}
The node2vec random walk is a non-Markovian random walk on the vertex set of a graph, widely used for network embedding and exploration.
This random walk model is defined in terms of three  parameters which control the probability of, respectively, backtracking moves, moves within  triangles, and moves to the remaining neighboring nodes. From a mathematical standpoint, the node2vec random walk is a nontrivial generalization of the non-backtracking random walk and thus belongs to the class of \textit{second-order} Markov chains.
Despite its widespread use in applications, little is known about its long-run behavior. The goal of this paper is to begin exploring its fundamental properties on arbitrary graphs. 
To this aim, we show how lifting the node2vec random walk to the state spaces of \emph{directed edges} and \emph{directed wedges} yields two distinct Markovian representations which are key for its asymptotic analysis.
Using these representations, we find mild sufficient conditions on the underlying finite or infinite graph to guarantee ergodicity, reversibility, recurrence and characterization of the invariant measure.
As we discuss, the behavior of the node2vec random walk is drastically different compared to the non-backtracking random walk. While the latter simplifies on arbitrary graphs when using its natural edge Markovian representation thanks to bistochasticity, the former simplifies on \textit{regular} graphs when using its natural \textit{wedge} Markovian representation. Remarkably, this representation reveals that a graph is regular if and only if a certain \emph{weighted Eulerianity} condition holds.
\end{abstract}

\section{Introduction: random walks, backtracking and local triangles}

Random walks on graphs constitute a fundamental class of stochastic processes at the interface of discrete mathematics and probability theory, providing a versatile framework for modeling network-based systems that evolve through successive random transitions. Arising from elementary considerations of paths and randomness, their study has developed into a deep and far-reaching theory, with strong connections to combinatorics, statistical mechanics, and spectral graph theory. In addition to their intrinsic mathematical interest, random walks underpin a broad spectrum of applications, ranging from the analysis of diffusion phenomena in the natural sciences to the design of efficient algorithms in computer science, including methods for recommender systems~\cite{gori_itemrank_2007}, image clustering~\cite{meila_random_2001}, sampling~\cite{ribeiro_sampling_2012}, and network exploration~\cite{costa_exploring_2007}. Their effectiveness lies in their ability to encode both local transition mechanisms and global structural properties of the underlying graph, thereby offering a unifying perspective for the analysis of complex systems across disciplines. Consequently, over the years a wide variety of different models have been explored within the framework of random walks on graphs. Prominent classical examples include: Markov processes, chains with long-memory, models in static or dynamic random environments, self-avoiding walks, polymer models and loop-erased random walks.

The present paper focuses on the so-called \emph{node2vec random walk}. This process has been introduced in the popular graph embedding algorithm node2vec~\cite{grover_node2vec_2016}, which is used for feature learning in networks with applications to multi-label classification and link prediction. 
The stochastic evolution of the node2vec random walk depends on the previous visited node and on the triangles that contain both the current and the previous node, which we refer to as \textit{local triangles} for simplicity. More specifically, the transitions of the node2vec random walk are determined by three parameters which allow to reward (resp.~penalize) backtracking moves and moves within  local triangles. In particular, such a random walk is not a Markov chain, but a \textit{second-order} Markov chain, see e.g. \cite{FASINO_TONETTO_TUDISCO_2023,HU2014183,Diaconis2013}. 

The study of second-order and, more generally, higher-order Markov chains associated to a graph has attracted significant interest in recent years, both in fundamental and applied literature. In this context, considerable attention has been devoted to the so-called \emph{non-backtracking random walk}~\cite{hermon_reversibility_2019,fitzner_hofstad_nbrw,alon_non-backtracking_2007,benhamou:hal-01141192,glover2026effectsbacktrackingpagerank,Kempton2016NonbacktrackingRW,Sodin2007RandomMN}, and to its variants such as the \emph{$\alpha$-backtracking}~\cite{barot_community_2021,meng_analysis_2020,hermon_reversibility_2019} and the \emph{$k$-non-backtracking walk}~\cite{hermon_reversibility_2019}. In fact, the node2vec random walk is a parametric model that generalizes both the non-backtracking and the $\alpha$-backtracking random walks. Yet, the mathematical structure of the latter models are significantly simpler than the more general node2vec. Indeed, after a standard lifting of these simpler (though still challenging) second-order chains from the vertex space to the space of directed edges, one obtains a Markovian representation whose associated transition matrix is doubly stochastic. Thanks to this, the invariant measures of the non-backtracking and the $\alpha$-backtracking random walks are obtained by solving appropriate counting problems in the directed edge space.

The main goal of this article is therefore to explore the basic mathematical structure and the first natural questions of the general node2vec random walk on arbitrary finite and infinite graphs.
As we shall discuss, the node2vec random walk is only bistochastic on highly artificial graphs or in very specific parameter regimes. For this reason, despite its popularity in the computer science community, the mathematical literature on the node2vec random walk is still limited and is typically confined to parameter regimes in which the key difficulty, that is distinguishing between local triangle moves and other moves, is absent~\cite{barot_community_2021,meng_analysis_2020}. To the best of our knowledge, the recent work \cite{schroeder_stationary_2025} contains the only mathematical results concerning node2vec random walks in the most general parameter regime. Therein, the authors manage to explicitly characterize the invariant measure of the node2vec random walk with general parameters on a certain class of finite graphs, known as \emph{household models}. The structure of the graphs in this class exhibits a topological contraction procedure which allows one to couple the node2vec random walk to an $\alpha$-backtracking random walk. As a consequence,  the characterization of the invariant measure is essentially reduced to a counting problem.

\paragraph{Main contributions at glance and paper organization}
We start in Section \ref{sec:model} by giving the precise model definition and introducing the two Markovian representations for the spaces of directed edges and wedges, respectively.
We state the three main results in Section \ref{sec:results}. Theorem \ref{thm:ergodicity} is referred to as the node2vec ergodic theorem, in analogy with the ergodic theorem for Markov chains on a countable state space. The result distinguishes between the case where the backtracking parameter $\alpha$ is zero and the case where it is not, providing mild sufficient conditions for both cases under which the process forgets its initial position. Under these conditions, the process stabilizes in the long run to a unique invariant measure. The delicate structure of this measure simplifies in wedge space; see \eqref{eq:simplified invariant equation wedges}.

Theorem \ref{thm:statdistonregulargraph} shows that the complicated structure of the node2vec random walk gets drastically simplified when considering $d$-regular graphs. In fact, only such graphs satisfy a certain \emph{Eulerianity condition}, see \eqref{eq:inandoutwedgesequal}, allowing to characterize the invariant measure of node2vec random walks in terms of local wedge weights, see \eqref{eq:stat dist reg graph}. As it turns out, the above mentioned Eulerianity can not be easily interpreted in terms of classical electrical network theory,
as such a theory is not yet fully understood in directed spaces. Yet, such a Eulerianity implies \emph{directed detailed balance conditions}, see Definition~\ref{def:EDB and WDB}, which characterize reversibility of the process in the directed edge and wedge space representations. Interestingly, the mentioned Eulerianity is not equivalent to such directed detailed balance. 

The third principal result, Theorem \ref{thm:recurrence}, addresses recurrence on infinite graphs under distinct, mild ergodic conditions, depending on whether backtracking moves are permitted. It states that, provided the directed detailed balanced condition is satisfied (e.g., if the graph is $d$-regular), then the node2vec random walk is recurrent on a graph $G$ if and only if the simple random walk is recurrent on $G$. The proof adapts the approach of \cite{hermon_reversibility_2019}, which establishes recurrence comparisons for the non-backtracking and the $k$-non-backtracking random walk, to the node2vec random walk. As an immediate corollary of Theorem \ref{thm:recurrence}, we obtain that node2vec on triangular lattices is recurrent. 
Section \ref{sec:Examples} is devoted to discussing examples, including subtle statements and counterexamples. In particular, we clarify why the analysis of node2vec with $\beta\neq\gamma\neq0$ cannot, in general, be reduced to a counting problem, except on highly artificial graphs that enforce bistochasticity. We also provide a counterexample demonstrating that directed detailed balance is not equivalent to regularity.

The proofs of the main three theorems are presented in the remaining sections. In Section \ref{ErgRegProofs} we present the proof of Theorem \ref{thm:ergodicity} and Theorem \ref{thm:statdistonregulargraph}, and in Section \ref{RecProofs} we present the proof of the recurrence. 

\section{The model and its Markovian representations}\label{sec:model}
For $n \in \mathbb{N}$, we write $[n] \coloneqq \{1,\ldots,n\}$.
Throughout this article, we assume that $G = (V,\bE)$ is a simple (i.e., without self-loops and multi-edges) undirected graph with node set $V$ and edge set $\bE$. We define the \textit{directed edge set} $\vE$ as
\begin{align} 
    \vE \coloneqq \{ (v_1,v_2) : v_1,v_2 \in V,\ \{v_1,v_2\} \in \bE \}. 
\end{align}
We denote the degree of a node $v \in V$ by $d_v$. We are interested in the following random walk on the graph $G$.

\begin{definition}\emph{(node2vec random walk) }
\label{node2vecDef} Given an undirected graph $G=(V,\bE)$ and parameters $\alpha,\beta,\gamma \ge 0$, a discrete-time stochastic process $X = (X_n)_{n \geq 0}$ with values in $V$ and path measure $\mathbb{P}$ is called \emph{node2vec random walk} if, for any $s,u,v\in V$ such that $\{s,u\},\{u,v\} \in \bE$, and for any $n\geq 1$,

\begin{align}\label{eq:transprobn2v}
\mathbb{P}(X_{n+1} = v \mid X_n = u,\, X_{n-1} = s) \propto 
\begin{cases} 
\alpha, & \text{if } s = v,\\
\beta, & \text{if } \{s,v\} \in \bE,\\ 
\gamma, & \text{if } \{s,v\} \notin \bE \text{ and } s \neq v.
\end{cases} 
\end{align}
\end{definition}
We denote the initial distribution of the node2vec random walk for a directed edge $(u_0,v_0)\in \vE$ by 
\begin{align}
    p_{*}((u_0,v_0))\coloneqq\mathbb P(X_0=u_0, X_1=v_0).
\end{align}
The random walk \eqref{eq:transprobn2v} is a parametric \emph{second-order Markov chain}, see e.g. \cite{FASINO_TONETTO_TUDISCO_2023}. The parameters $\alpha,\beta,\gamma$ control the probability of, respectively, the backtracking steps, the steps within local triangles and the remaining steps, which we call \textit{forward} steps. We give a pictorial representation of the role of the parameters in Figure~\ref{fig:node2vecdef}. Special parameter cases include the \textit{simple random walk} ($\alpha = \beta = \gamma$), the \textit{non-backtracking random walk} ($\alpha =0, \beta = \gamma$) and the \textit{$\alpha$-backtracking random walk} ($\alpha > 0, \beta = \gamma$). 
As the main novelty of this article is to analyze the random walk in full generality in a non-trivial regime, i.e., $\beta \neq \gamma$, we assume $\beta,\gamma>0$. Still, we will consider and distinguish the two cases $\alpha > 0$ and $\alpha = 0$, since allowing backtracking drastically changes the underlying ergodic structure.
\begin{figure}[tbp]
    \centering
    \includegraphics[width=0.3\linewidth]{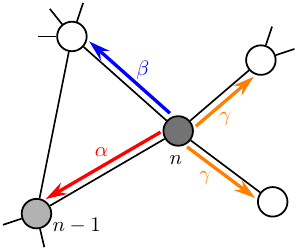}
    \caption{Illustration of the transition rates of a node2vec random walk.}
    \alt{Illustration of the transition rates of a node2vec random walk.}
    \label{fig:node2vecdef}
\end{figure}
While $X$ is not a Markov chain on the state space $V$, by lifting it to the higher-order spaces of the directed edges and the directed wedges of $G$, we get two different 
Markovian representations which will be at the core of our analysis. Next, we formally introduce the notions required by our analysis.

For $e = (v_1,v_2) \in \vE$, we write $e(1) = v_1$ for the starting node and $e(2) = v_2$ for the ending node and by $-e = (v_2,v_1)$ we denote the directed edge pointed in the opposite direction of $e$. A \emph{directed wedge} is a length‑two directed walk in $G$. Thus, the \emph{directed wedge set} $W$ in $G$ is 
\begin{align}\label{eq:wedge space definition}
    W \coloneqq \Big\{ w = (w(1),w(2),w(3)) = (v_1,v_2,v_3) : v_1,v_2,v_3 \in V,\ (v_1,v_2) \in \vE,\ (v_2,v_3) \in \vE \Big\}. 
\end{align} 
For each wedge $w \in W$, define its first and second directed edges as 
\begin{align} 
    e_1(w) \coloneqq (w(1),w(2)), \qquad e_2(w) \coloneqq (w(2),w(3)), 
\end{align} 
and its inverse wedge as 
\begin{align} 
    -w \coloneqq (w(3),w(2),w(1)), 
\end{align}
which corresponds to traversing $w$ backwards.

The transition probabilities of the node2vec random walk~\eqref{eq:transprobn2v} depend on whether the next step is a backtracking step, a local triangle step or a forward step. In other words, the transition probabilities depend on the \textit{last two} visited nodes. Consequently, instead of analysing the process on $V$, we consider its Markovian representation on the space of directed edges $\vE$. Although the resulting walk on $\vE$ is Markovian, it can be more insightful to work with a representation for which the state space can be partitioned according to the three different transition types of~\eqref{eq:transprobn2v}. This is accomplished by lifting the node2vec random walk to the space of directed wedges $W$, so that each wedge-state encodes exactly the information required to determine the transition weights in~\eqref{eq:transprobn2v}. For a related notion of lifting, see~\cite{chen_lifting_1999}. Next, we formally define the two Markovian representations that we consider. 

\paragraph{Edge Markovian representation}
Define the lifted graph $\vv{G} = (\vE,W)$, where $\vE$ and $W$ are the set of directed edges and wedges defined above. Let  $\vv{Y} = (\vv{Y}_n)_{n \ge 1}$ be the (time-homogeneous) Markov chain on $\vE$ with transition kernel $\vv{P}$ given by 
\begin{align} 
    \vv{P}(e,e') \coloneqq \mathbb{P}(X_3 = e'(2) \mid X_2 = e'(1),\, X_1 = e(1))\,  
\end{align}
for any $e,e'\in \vE$ such that $e(2)=e'(1)$, and $\vv{P}(e,e')=0$ otherwise. Thus, $\vv{Y}$ can move from $e$ to $e'$ exactly when the ending node of $e$ matches the starting node of $e'$, i.e., when $(e,e')\in W$. $\vv{Y}$ is initialized by the distribution $p_*$ defined above. For the analysis of this chain, it is convenient to define the \textit{edge in-neighbors} of a node $v\in V$ as
\begin{align}\label{eq:edge in neighbors of v}
    \mathrm{IN}_{\vE}(v) &= \{ e'\in \vE: e'(2) = v \}.
\end{align}

\paragraph{Wedge Markovian representation}
We begin by introducing a useful partition of the set of wedges 
\begin{align}\label{eq:wedge space decomposition}
    W = W_{\Delta} \cup W_{\Lambda} \cup W_{-},
\end{align}
where 
\begin{align}
	W_{\Delta} \coloneqq \{ w\in W: \{w(3),w(1)\} \in \bE \}
\end{align}
is the subset of \textit{triangle wedges},
\begin{align}
	W_{\Lambda} \coloneqq \{ w\in W: \{w(3),w(1)\} \notin \bE, w(1) \neq w(3) \}
\end{align}
is the subset of \textit{open wedges}, and 
\begin{align}
    W_{-} \coloneqq \{ w\in W: w(1) = w(3)  \}
\end{align}
is subset of \textit{flat wedges}.
By construction, $W_{\Delta}, W_{\Lambda}$, and $W_{-}$ are pairwise disjoint. For any wedge $w\in W$, its \textit{type} $\lambda (w)$ is
\begin{align}
	\lambda (w) \coloneqq 
    \begin{cases}
        \alpha, \text{ if } w \in W_{-}, \\
		\beta, \text{ if } w \in W_{\Delta}, \\
		\gamma, \text{ if } w \in W_{\Lambda}.
    \end{cases}
\end{align}
As $\lambda$ is not depending on the direction of the wedge, for any $w\in W$ it holds
\begin{align}\label{eq:wedgefact1}
    \lambda(w) = \lambda(-w).
\end{align}
Similar to~\eqref{eq:edge in neighbors of v}, we introduce the \textit{wedge in-neighbors} of a directed wedge $w\in W$ as
\begin{align}
	\mathrm{IN}_W(w) \coloneqq \{ w'\in W: w'(2) = w(1), w'(3) = w(2), \{w'(1),w(1)\} \in \bE  \},
\end{align}
which corresponds to all wedges $w'$ which concatenate into $w$, see Figure \ref{fig:in-and-outneighbors}. In other words, $\mathrm{IN}_W(w)$ contains all the wedges whose second edge equals the first edge of $w$.
With an abuse of notation, we denote the wedge in-neighbors of a directed edge $e\in \vE$ and of a node $v\in V$ as
\begin{align}
    \mathrm{IN}_W(e) &= \{ w'\in W: w'(2) = e(1), w'(3) = e(2)  \}, \\
    \mathrm{IN}_W(v) &= \{ w'\in W: w'(3) = v \}.
\end{align}%
These are all the wedges that end in $e$ (respectively, end in $v$). To simplify the notation, from now on we will drop the index $W$ or $\vE$ from the in-neighbor sets since it will be clear from the context. 

Finally, for each wedge $w\in W$ we define its out-neighbor set as all wedges that share the first two nodes with $w$:
\begin{align}
	\mathrm{ON}(w) = \{ w'\in W: w'(1) = w(1), w'(2) = w(2), \{w(2),w'(3)\} \in \bE  \},
\end{align}
representing all possible next steps from the directed edge $e_1(w)$, see Figure~\ref{fig:in-and-outneighbors}. \\
\begin{figure}[tbp]\label{fig:in-and-outneighbors}
    \centering
    \begin{minipage}{0.5\textwidth}
    \includegraphics[width=0.9\linewidth]{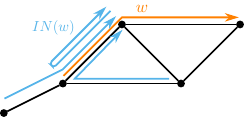}
    \end{minipage}\hfill
    \begin{minipage}{0.5\textwidth}
            \centering
            \includegraphics[width=0.9\linewidth]{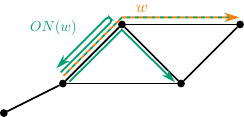}
    \end{minipage}
    \caption{Illustration of the in-and out-neighbors of a fixed wedge $w$}
    \alt{Illustration of the in-and out-neighbors of a fixed wedge.}
\end{figure}
With the introduced notation we can phrase the transitions of the node2vec random walk on the wedge space. The probability of choosing a specific next wedge $w$ from all possible transition options that are given by $\mathrm{ON}(w)$ is 
\begin{align}\label{eq:defwedgetransition} 
    p(w) = \frac{\lambda(w)}{\sum_{w' \in \mathrm{ON}(w)} \lambda(w')}. 
\end{align}
We define the wedge chain to be the Markov chain $\hY =(\hY_n)_{n\ge 1}$ with state space $W$, and $w,w' \in W$ and transition matrix
\begin{nalign}\label{eq:wedge transition definition}
    \hP(w,w') &\coloneqq 
     \vv{P}(e_3,e_4)\indic_{ \{ e_2 = e_3 \} } = \mathbb{P}(\vv{Y}_3 = e_4|\vv{Y}_2 = e_3 = e_2,\vv{Y}_1 = e_1) \\
     %
    &= \PP(X_4 = w'(3)| X_3 = w'(2), X_2 = w'(1))\indic_{ \{ w \in \mathrm{IN}(w') \}} \\
    &= p(w') \indic_{ \{ w \in \mathrm{IN}(w') \}},
\end{nalign}
for directed wedges $w= (e_1,e_2)$ and $w'= (e_3,e_4)$.
Thus, $w$ can move to $w'$ exactly when the second edge of $w$ equals the first edge of $w'$ and the transition probability is proportional to the wedge type  $\lambda(w')$. For all $w\in \mathrm{IN}(w')$ we get $\hP(w,w') = p(w')$ which means that the rates do not depend on $w$. Note that any initial condition $\hat{p}_{*}$ for $\hY$ must satisfy 
\begin{nalign}
    \hat{p}_{*}(w) \coloneqq ~&\PP(X_0 = w(1), X_1 = w(2), X_2 = w(3)) \\
    = ~&\PP(X_0 = w(1), X_1 = w(2))\PP(X_2 = w(3) \mid X_0 = w(1), X_1 = w(2)).
\end{nalign}
Hence, $\hat{p}_{*}$ is uniquely determined by the choice of the initial condition $p_{*}$ for the edge chain $\vv{Y}$. 

The $n$-step transition kernel of $\hY$ is given by
\begin{align}\label{eq:nsteptransition}
    \hP^n(w,w') = \PP(X_{n+2} = w'(3), X_{n+1} = w'(2), X_n = w'(1)| X_2 = w(3),X_1 = w(2)).
\end{align}
The Markov chain on the wedge space can be interpreted as a double lifting of the original process. Indeed, for the first lifting the state space of $\vv{Y}$ is the edge set $\vE$ of the original process $X$, and the state space of $\hY$ is the edge space $W$ of the lifted chain $\vv{Y}$.
Note that $\hP$ is not the same as the process with kernel $\hP'\coloneqq  \vv{P}^2$ on $W$ that takes two consecutive steps on the directed edge set as the latter is sub-Markov on $W$.

\section{Main results}\label{sec:results}

Our first main result is concerned with the ergodicity of the node2vec random walk on a graph $G$. In it, we collect the necessary conditions on $G$ under which the transition kernel $\hP$ is irreducible and aperiodic. We further show that $\hY$ has a simpler invariant equation. Finally, we express the stationary distribution of the node2vec random walk on $V$ in terms of the stationary distribution of the lifted chain $\hY$ on $W$ via a pullback.
\begin{theorem}[node2vec ergodic theorem]\label{thm:ergodicity}
Let $G$ be a connected graph, and let $\alpha\ge 0$ and $\beta,\gamma>0$.

\begin{enumerate}
    \item[$(i)$] If $\alpha>0$, then the transition kernel $\hP$ is irreducible. Moreover, if $G$ contains at least one triangle, then $\hP$ is also aperiodic.
    
    \item[$(ii)$] If $\alpha=0$ and $\min_{v\in V} d_v \ge 2$ and $\max_{v\in V} d_v > 2$,
    then $\hP$ is irreducible.
\end{enumerate}

Furthermore, when $G$ is finite and $\hP$ is irreducible and aperiodic, $\hY$ admits a unique invariant probability measure $\hpi$, which satisfies
\begin{equation}\label{eq:simplified invariant equation wedges}
    \hpi(w)
    = p(w)\sum_{w'\in \mathrm{{IN}}(w)} \hpi(w'),
    \qquad w\in W.
\end{equation}
As a consequence, $X$ possesses a unique limiting measure $\pi$ on $V$, given by
\begin{equation}\label{eq:pullback wedges to nodes}
    \pi(v)
    = \lim_{n\to\infty} \PP\bigl(X_n=v \,\big|\, X_2=u_2,\,X_1=u_1\bigr)
    = \sum_{w\in \mathrm{{IN}}(v)} \hpi(w),
\end{equation}
for all $v\in V$ and all $\{u_1,u_2\}\in \vE$.
\end{theorem}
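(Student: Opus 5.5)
The proof splits into three tasks: irreducibility and aperiodicity of $\hP$, the simplified invariant equation, and the pullback to $V$. The common first step is to reduce irreducibility of $\hP$ on $W$ to irreducibility of $\vP$ on $\vE$. Since $\hP(w,w')>0$ exactly when $e_2(w)=e_1(w')$ and $\lambda(w')>0$, a path of wedges corresponds to a path of directed edges whose consecutive pairs are allowed wedges. Hence $\hP$ is irreducible on the set of wedges of positive type if and only if $\vP$ is irreducible on $\vE$ and every directed edge is both entered and left by some allowed move. When $\alpha=0$, flat wedges have zero mass and must be excluded from the state space.

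\emph{Case (i), $\alpha>0$.} Every wedge has positive type. Given directed edges $e,e'$, take a path in $G$ from $e(2)$ to $e'(1)$, which exists by connectivity. Walking along it, possibly with a backtrack at the start to reorient, gives a sequence of directed edges from $e$ to $e'$. Each consecutive pair is a wedge of positive weight, so $\vP$ is irreducible, and therefore $\hP$ is too.

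For aperiodicity, backtracking produces the return cycle $e\to -e\to e$ of length $2$. A triangle $u,v,x$ produces the cycle $(u,v)\to(v,x)\to(x,u)\to(u,v)$ of length $3$ using $\beta$-moves. Since $\gcd(2,3)=1$, the period is $1$ at that edge, and hence everywhere by irreducibility. The same argument transfers to wedges.

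\emph{Case (ii), $\alpha=0$.} This is the main obstacle. We need that the non-backtracking walk on directed edges, with all non-backtracking moves allowed since $\beta,\gamma>0$, is irreducible when the minimum degree is at least $2$ and some vertex has degree at least $3$. This is the known irreducibility of the non-backtracking matrix, and I would prove it directly in three steps.

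\emph{Step 1.} Minimum degree at least $2$ means the walk never gets stuck.

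\emph{Step 2.} From any $e$ one can reach $-e$. Walk forward non-backtrackingly. In a finite graph the walk eventually repeats a vertex and so enters a cycle. Starting from a vertex of degree at least $3$, one leaves along one branch, closes a cycle, and returns along a different edge, which allows the direction to be reversed. Concretely, one uses a "lollipop" path: go out to a cycle, traverse it, and come back. For infinite graphs, a non-backtracking path that never repeats vertices would be a problem, so the argument must instead use a degree-$\ge3$ vertex together with a cycle or other structure. I would handle this carefully, possibly by reduction to the known result in \cite{hermon_reversibility_2019}.

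\emph{Step 3.} Combining reachability of $-e$ with connectivity, walking along shortest paths and reversing where needed, gives reachability between any two directed edges.

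\emph{Invariant equation.} For finite $G$, irreducibility and aperiodicity give a unique stationary $\hpi$ by the standard ergodic theorem. The stationarity equation reads $\hpi(w)=\sum_{w'}\hpi(w')\hP(w',w)=p(w)\sum_{w'\in\mathrm{IN}(w)}\hpi(w')$, because $\hP(w',w)=p(w)\indic_{\{w'\in\mathrm{IN}(w)\}}$ does not depend on $w'$. This yields \eqref{eq:simplified invariant equation wedges}.

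\emph{Pullback to $V$.} We have $\PP(X_n=v\mid X_2,X_1)=\sum_{w\in\mathrm{IN}(v)}\PP(\hY_{n-2}=w\mid\cdot)$, using \eqref{eq:nsteptransition} with an initial wedge determined from the initial edge $(u_1,u_2)$ via one transition. The convergence theorem for Markov chains then gives convergence of each term to $\hpi(w)$. Since the sum over $\mathrm{IN}(v)$ is finite, summing yields \eqref{eq:pullback wedges to nodes}, independently of $(u_1,u_2)$.
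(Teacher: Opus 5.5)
\textbf{The gap: case $(ii)$ on infinite graphs.} You leave Step 2 of case $(ii)$ open when $G$ is infinite. It cannot be closed, because the claim is false there. The $3$-regular tree is connected with $\min_v d_v=3\ge 2$ and $\max_v d_v=3>2$. Yet a non-backtracking walk on a tree never revisits a vertex, so from a directed edge $e$ it never reaches $-e$ (nor returns to $e$). Hence neither $\vP$ nor $\hP$ is irreducible, even restricted to $W\setminus W_{-}$.

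No reduction to \cite{hermon_reversibility_2019} can rescue this. An extra hypothesis is needed: finiteness of $G$, or something like the cycle condition \eqref{eq:condition every ball contains cycle}, which the paper imposes only in Theorem~\ref{thm:recurrence}. The paper's own proof of $(ii)$ (``$G$ is not a cycle, so the walk cannot get stuck'') has exactly the same defect. You should therefore state and prove $(ii)$ for finite $G$.

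\textbf{The finite case and the rest.} For finite $G$ your lollipop argument is the standard irreducibility proof for the non-backtracking matrix. Spell out the case that actually uses $\max_v d_v>2$. Let the forward walk $x_0=e(1),x_1=e(2),x_2,\ldots$ first repeat a vertex at $x_j=x_i$.
\begin{itemize}
\item If $i\ge 1$, then $x_0,\ldots,x_j,x_{i-1},\ldots,x_0$ is non-backtracking and ends in $-e$.
\item If $i=0$, the stick is empty and continuing on the cycle only brings you back to $e$. You must leave the cycle through a vertex of degree at least $3$ on it (one exists since $G$ is connected and not a cycle). Then close a second cycle or re-hit the first one, and return with reversed orientation.
\end{itemize}
Step 3 then follows by transitivity and connectivity: $e$ reaches every out-edge of $e(2)$, either directly or via $-e$.

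Everything else in your proposal is correct and essentially the paper's argument. For aperiodicity, the paper exhibits returns of lengths $3$ and $4$ to the single wedge $(r,u,v)$. You instead use edge-level returns of lengths $2$ and $3$. These transfer to $\hP$ because a closed edge walk $e_1\to\cdots\to e_k\to e_1$ lifts to the closed wedge walk $(e_1,e_2)\to\cdots\to(e_k,e_1)\to(e_1,e_2)$ of the same length, and the period is a class property. Your pullback averages over the first wedge given the edge $(u_1,u_2)$, which is more careful than the paper's $\hP^{n-2}((u_1,u_2,\cdot),w)$. Finally, your observation that flat wedges are never entered when $\alpha=0$ is a correct refinement the paper omits: irreducibility can only hold on $W\setminus W_{-}$.
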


\begin{remark}[Positive or zero back-tracking parameter $\alpha$]
    The case $\alpha = 0$ behaves differently from the case $\alpha> 0$. When $\alpha=0$, additional structural assumptions on the graph $G$ are required to ensure irreducibility of the transition kernel $\hP$. 
Specifically, the condition $\min_{v\in V} d_v \ge 2$ prevents the walk from becoming trapped at vertices of degree one, while $\max_{v\in V} d_v > 2$ excludes the degenerate case in which $G$ is a simple cycle. Together, these conditions guarantee irreducibility of $\hP$.
On the other hand, the aperiodicity of $\hP$ for $\alpha=0$ cannot be characterized by a simple local condition. 
Indeed, $\hP$ is aperiodic if and only if $G$ contains at least two cycles whose lengths have greatest common divisor equal to one. 
This is precisely the definition of an aperiodic graph, and therefore no simpler necessary condition exists in this setting.
\end{remark}

The assumption that $G$ contains at least one triangle is quite weak, and arises from the distinction between triangles and non-triangles in node2vec. In fact, if $G$ does not contain any triangle, then the transition probabilities of node2vec do not depend on $\beta$, and the process is essentially an $\alpha$-backtracking random walk. 
\begin{remark}[Edge and wedge representation]\label{rem:thm1 also holds for edges}
Theorem~\ref{thm:ergodicity} can equivalently be formulated using the edge Markovian representation. 
More precisely, by replacing the wedge-level quantities $\hP$, $\hY$, $\hpi$ and $w,w'\in W$ with their edge-level counterparts $\vP$, $\vv{Y}$, $\vpi$ and $e,e'\in \vE$, all statements of the theorem remain valid.
The only exception is the simplified invariant relation \eqref{eq:simplified invariant equation wedges}, which relies on the wedge structure and does not admit an analogue on the edge space $\vE$.
\end{remark}
The following lemma clarifies the precise relationship between the invariant measures of the edge and wedge Markovian representations.
\begin{lemma}[Relation of invariant measures on edge and wedge spaces]\label{lem:relation inv measures on E and W}
    Let $G$ be connected. Assume that $\vP$ and $\hP$ are irreducible and aperiodic. Then, there exist unique invariant vectors $\vpi$ for $\vv{Y}$ and $\hpi$ for $\hY$ that satisfy
    \begin{align}\label{eq:pullback wedges to edges}
        \vpi(e) = \sum\limits_{w\in \mathrm{IN}(e)} \hpi(w) 
    \end{align}
    for all $e\in \vE$ and moreover,
    \begin{align}\label{eq:relation edges to wedges}
        \hpi(w) = \vpi(e_1(w))\vP(e_1(w),e_2(w))
    \end{align}
    for all $w\in W$.
\end{lemma}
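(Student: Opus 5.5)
Since $\vP$ and $\hP$ are irreducible and aperiodic on the finite state spaces $\vE$ and $W$, Perron--Frobenius (equivalently the standard Markov chain ergodic theorem) gives a unique invariant probability vector for each chain; call them $\vpi$ and $\hpi$. The plan is to start from $\vpi$, define a candidate measure on $W$ by \eqref{eq:relation edges to wedges}, show that it is a probability vector invariant for $\hP$, and conclude by uniqueness that it equals $\hpi$. Relation \eqref{eq:pullback wedges to edges} then follows by summing.

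Concretely, set $\mu(w) \coloneqq \vpi(e_1(w))\vP(e_1(w),e_2(w))$ for $w\in W$. Each wedge is determined by the pair $(e_1(w),e_2(w))$ with $e_1(w)(2)=e_2(w)(1)$. Summing over all wedges with a fixed first edge $e$ gives $\vpi(e)\sum_{e'}\vP(e,e')=\vpi(e)$. Hence $\sum_w \mu(w)=1$ and $\mu\ge 0$.

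For invariance, fix $w'=(e_3,e_4)$. By \eqref{eq:wedge transition definition}, $\hP(w,w')=\vP(e_3,e_4)$ exactly when $e_2(w)=e_3$, and $0$ otherwise. Therefore
\[
\sum_{w}\mu(w)\hP(w,w') = \vP(e_3,e_4)\sum_{w:\,e_2(w)=e_3}\vpi(e_1(w))\vP(e_1(w),e_3) = \vP(e_3,e_4)\,\vpi(e_3) = \mu(w').
\]
The middle equality is the invariance equation of $\vpi$ for $\vP$. Here the sum over wedges $w$ with $e_2(w)=e_3$ is exactly the sum over all $e$ with $\vP(e,e_3)$ possibly nonzero, since $\vP(e,e_3)=0$ unless $e(2)=e_3(1)$. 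By uniqueness of the invariant probability of $\hP$, $\mu=\hpi$, which is \eqref{eq:relation edges to wedges}.

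For \eqref{eq:pullback wedges to edges}, fix $e\in\vE$. Note that $\mathrm{IN}(e)$ is precisely the set of wedges $w$ with $e_2(w)=e$. Using the formula just proved and the invariance of $\vpi$,
\[
\sum_{w\in\mathrm{IN}(e)}\hpi(w)=\sum_{e':\,e'(2)=e(1)}\vpi(e')\vP(e',e)=\vpi(e).
\]

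The argument is mostly bookkeeping. The only point needing care is the index matching: the wedges entering $w'$ under $\hP$ must correspond bijectively to the edges $e'$ feeding into $e_3$ under $\vP$, and the definition of $\mathrm{IN}(e)$ must match exactly "wedges whose second edge is $e$". I would verify this bijection explicitly from the definitions of $W$ and $\mathrm{IN}$. One also needs $\vpi$ and $\hpi$ normalized the same way, which holds because $\mu$ has total mass $1$. If one prefers not to assume irreducibility of both chains, irreducibility and aperiodicity of $\vP$ alone suffice to produce $\mu$; uniqueness for $\hP$ is then what is used to identify $\mu$ with $\hpi$.
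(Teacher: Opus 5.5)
Your proof is correct, but it takes a different route from the paper. The paper argues through convergence to equilibrium. It writes $\PP(X_{n+2}=z,X_{n+1}=y,X_n=x)=\PP(X_{n+2}=z\mid X_{n+1}=y,X_n=x)\,\PP(X_{n+1}=y,X_n=x)$ and observes that the conditional factor is the constant $\vP((x,y),(y,z))$. It then lets $n\to\infty$, using aperiodicity so that the wedge and edge marginals of the same walk $X$ converge to $\hpi$ and $\vpi$. Likewise, \eqref{eq:pullback wedges to edges} is obtained the same way as \eqref{eq:pullback wedges to nodes}: the law of $(X_n,X_{n+1})$ at $e$ is a sum of wedge probabilities over $\mathrm{IN}(e)$, and one passes to the limit.

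You instead check algebraically that $\mu(w)=\vpi(e_1(w))\vP(e_1(w),e_2(w))$ is an invariant probability for $\hP$, and conclude by uniqueness. Your index bookkeeping is right: $w\in\mathrm{IN}(w')$ iff $e_2(w)=e_1(w')$, $p(w')=\vP(e_1(w'),e_2(w'))$, and $\mathrm{IN}(e)$ is the set of wedges with second edge $e$.

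Your route uses no limit theorem, so aperiodicity is superfluous; irreducibility on a finite state space already gives uniqueness. It also makes the correspondence between invariant measures explicit in both directions. Conversely, \eqref{eq:simplified invariant equation wedges} gives $\hpi((e,e'))=\vP(e,e')\sum_{w\in\mathrm{IN}(e)}\hpi(w)$, so the pushforward of $\hpi$ is $\vP$-invariant. The paper's route, in exchange, identifies $\vpi$ and $\hpi$ directly as the limiting laws of the two lifts of the same walk $X$, which is the form used in Theorem~\ref{thm:ergodicity}.

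Both arguments tacitly need $G$ finite (or positive recurrence) for the invariant probabilities to exist. You assume this explicitly, and it matches the paper's intended setting.
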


We now introduce a notion of directed detailed balance for the edge and wedge Markovian representations of node2vec. This condition captures a natural form of reversibility that respects the directed structure of the lifted states and will be central to our following results on the stationary distribution and recurrence.

\begin{definition}[Edge and wedge directed detailed balance]\label{def:EDB and WDB}
    We say that the vector $\vpi$ fulfills the \emph{edge directed detailed balance} if
    \begin{align}\label{eq:antireversibilityedges}
        \vpi(e)\vP(e,e') = \vpi(e')\vP(-e',-e)\tag{EDB}
    \end{align}
    for all $e,e'\in \vE$.\\
    We say that the vector $\hpi$ fulfills the \emph{wedge directed detailed balance} if
    \begin{align}\label{eq:antireversibility}
        \hpi(w)\hP(w,w') = \hpi(w')\hP(-w',-w)\tag{WDB}
    \end{align}
    for all $w,w'\in W$.
\end{definition}
Our next theorem shows that regular graphs are somewhat special for node2vec random walks. Indeed, we show that a graph is regular if and only if it satisfies a certain weighted Eulerianity condition. As a consequence,
in the wedge space the stationary measure takes a remarkably simple closed form. Furthermore, node2vec random walks are reversible on regular graphs, in the sense expressed in~\eqref{eq:antireversibility}. The latter reversibility is not equivalent to regularity of the graph as we shall discuss below.
\begin{theorem}[node2vec on regular graphs]\label{thm:statdistonregulargraph}
    Let $G$ be finite and connected. Then, $G$ satisfies the weighted Eulerianity condition
    \begin{align}\label{eq:inandoutwedgesequal}
        \sum\limits_{w'\in \mathrm{ON}(w)}\lambda(w') = \sum\limits_{w'\in \mathrm{IN}(w)}\lambda(w')
    \end{align}
    for all $w\in W$ if and only if $G$ is a regular graph. \\
    Let $\hP$ be irreducible and assume that $G$ satisfies~\eqref{eq:inandoutwedgesequal}. Define $Z \coloneqq \sum_{w'\in W}\lambda(w')$. Then, we can express the stationary distributions of $\hY$ and $X$, respectively on $W$ and $V$ as
    \begin{align}\label{eq:stat dist reg graph}
        \hpi(w) = \frac{\lambda(w)}{Z},\qquad \pi(v)= \sum\limits_{w\in \mathrm{IN}(v)} \frac{\lambda(w)}{Z}
    \end{align}
    for all $w\in W$ and $v\in V$. Additionally, $\hpi$ satisfies~\eqref{eq:antireversibility}.
\end{theorem}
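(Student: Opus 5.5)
The plan is to reduce everything to an explicit formula for the two sums in \eqref{eq:inandoutwedgesequal} in terms of degrees and common-neighbour counts. Then I verify the closed form \eqref{eq:stat dist reg graph} directly against the simplified invariant equation \eqref{eq:simplified invariant equation wedges}.

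\textbf{Step 1 (Eulerianity $\Leftrightarrow$ regularity).} For adjacent $a,b$, let $t(a,b)=|\{x:\{x,a\},\{x,b\}\in\bE\}|$ be the number of common neighbours. This quantity is symmetric in $a,b$. Fix $w=(a,b,c)\in W$.
\begin{itemize}
\item $\mathrm{ON}(w)$ consists of the wedges $(a,b,x)$ with $x$ a neighbour of $b$. Here $x=a$ contributes $\alpha$, the $t(a,b)$ common neighbours contribute $\beta$ each, and the remaining $d_b-1-t(a,b)$ neighbours contribute $\gamma$ each.
\item $\mathrm{IN}(w)$ consists of the wedges $(y,a,b)$ with $y$ a neighbour of $a$. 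Their type is decided by whether $y=b$ or $y$ is adjacent to $b$, which gives the same count with $d_a$ in place of $d_b$.
\end{itemize}
Hence
\begin{align*}
\sum_{w'\in\mathrm{ON}(w)}\lambda(w')-\sum_{w'\in\mathrm{IN}(w)}\lambda(w')=\gamma\,(d_b-d_a).
\end{align*}
Since $\gamma>0$, condition \eqref{eq:inandoutwedgesequal} holds for all $w$ if and only if $d_a=d_b$ for every edge $\{a,b\}$. Every directed edge is the first edge of some wedge, namely the flat wedge $(a,b,a)$. By connectedness, this condition is equivalent to regularity. The converse direction is immediate from the same formula.

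\textbf{Step 2 (stationary law).} Assume \eqref{eq:inandoutwedgesequal} and set $\hpi(w)=\lambda(w)/Z$. By \eqref{eq:wedge transition definition}, $(\hpi\hP)(w)=p(w)\sum_{w'\in\mathrm{IN}(w)}\hpi(w')$. Inserting \eqref{eq:defwedgetransition} gives
\begin{align*}
(\hpi\hP)(w)=\frac{\lambda(w)}{Z}\cdot\frac{\sum_{w'\in \mathrm{IN}(w)}\lambda(w')}{\sum_{w'\in\mathrm{ON}(w)}\lambda(w')}=\frac{\lambda(w)}{Z}
\end{align*}
by \eqref{eq:inandoutwedgesequal}, so $\hpi$ is invariant. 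Since $W$ is finite and $\hP$ is irreducible, the invariant probability measure is unique, so it is this $\hpi$.

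For $\pi$ on $V$, I would use the pullback \eqref{eq:pullback wedges to nodes} from Theorem~\ref{thm:ergodicity}. If aperiodicity is not assumed, I would instead note that the law of $w(3)$ under a stationary $\hY$ is stationary for $X$. In either case $\pi(v)=\sum_{w\in\mathrm{IN}(v)}\lambda(w)/Z$.

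\textbf{Step 3 (WDB).} Both sides of \eqref{eq:antireversibility} vanish unless $w\in\mathrm{IN}(w')$. For $w=(a,b,c)$ and $w'=(b,c,d)$, one has $-w'=(d,c,b)\in\mathrm{IN}(-w)$ with $-w=(c,b,a)$, so the relation is symmetric. Using \eqref{eq:wedgefact1}, the left side is $\lambda(w)\lambda(w')/(Z\,S(w'))$ and the right side is $\lambda(w')\lambda(w)/(Z\,S(-w))$, where $S(u)=\sum_{u'\in\mathrm{ON}(u)}\lambda(u')$. By the Step 1 formula,
\begin{align*}
S(w')=\alpha+\beta t(b,c)+\gamma(d_c-1-t(b,c)),\qquad S(-w)=\alpha+\beta t(c,b)+\gamma(d_b-1-t(c,b)).
\end{align*}
These are equal because $t$ is symmetric and, by Step 1, $d_b=d_c$.

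\textbf{Main obstacle.} The only delicate point is Step 1: correctly identifying the types of the wedges in $\mathrm{IN}(w)$. The type of $(y,a,b)$ depends on whether $y$ is adjacent to $b$, so the common-neighbour count $t(a,b)$ appears in both sums and cancels. This cancellation is exactly what leaves the clean residue $\gamma(d_b-d_a)$.
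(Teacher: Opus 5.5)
Your proposal is correct and follows essentially the same route as the paper. Your closed formula $\alpha+\beta\, t(a,b)+\gamma\,(d_b-1-t(a,b))$ packages the paper's type-by-type comparison: one flat wedge on each side and equal triangle-wedge counts, so Eulerianity reduces to equal open-wedge counts, i.e.\ $d_a=d_b$. Step~2 is the paper's invariance check, and in Step~3 you verify $S(w')=S(-w)$ directly from the formula, where the paper instead combines Lemma~\ref{lem:wedgefact} with Eulerianity at $-w_1$. You also handle two points the paper leaves implicit: the formula for $\pi$ when aperiodicity is not assumed, and the case $w\notin\mathrm{IN}(w')$ in the WDB check.
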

We call the condition~\eqref{eq:inandoutwedgesequal} the weighted Eulerianity condition because it can be understood as a weighted extension of Eulerianity on directed graphs. Indeed, a directed graph is called Eulerian if all vertices have equal in- and out-degree. In particular, this is the one case where the simple random walk on a directed graph exhibits an explicit expression of its stationary distribution, see e.g.~\cite{aldousfill}.

For a regular graph, the stationary distribution of node2vec looks quite different from the degree-proportional stationary distribution on $V$ of the simple random walk, non-backtracking random walk, or $\alpha$-backtracking random walk.  In particular, the stationary distribution $\pi$ obtained in \eqref{eq:stat dist reg graph} is a normalized linear combination of the parameters $\alpha$, $\beta$, and $\gamma$, weighted by the number of incoming wedges of the corresponding type.

In a $d$-regular graph, each node $v\in V$ has exactly $d$ incoming flat wedges, two incoming triangle wedges for every triangle containing $v$ and the remaining incoming wedges are open wedges. Altogether, the total number of incoming wedges satisfies $\lvert\mathrm{IN}_W(v)\rvert =d^2$. Consequently, when $\beta$ is large, nodes participating in many triangles receive higher stationary mass, whereas a large value of $\gamma$ favors nodes with many incoming open wedges. Since the number of incoming flat wedges is constant across all nodes, the influence of $\alpha$ on $\pi(v)$ is determined by its relative magnitude compared to $\beta$ and $\gamma$, as well as by the number of triangles incident to $v$.

The next corollary shows that for a finite regular graph, also edge directed detailed balance holds.
\begin{corollary}\label{cor:anti-rev on edges}
    If $G$ is a finite regular graph, then~\eqref{eq:antireversibilityedges} holds for a vector $\vpi$. Moreover, 
    \begin{align}\label{eq:regular opposite edges equal stat}
        \vpi(e) = \vpi(-e)
    \end{align}
    for all $e \in \vE$.
\end{corollary}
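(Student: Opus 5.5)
Given Theorem~\ref{thm:statdistonregulargraph}, the plan is to define $\vpi$ by pushing the explicit wedge measure down to edges through Lemma~\ref{lem:relation inv measures on E and W}, and then to check \eqref{eq:antireversibilityedges} and \eqref{eq:regular opposite edges equal stat} by direct computation. Let $G$ be $d$-regular. By Theorem~\ref{thm:statdistonregulargraph}, \eqref{eq:inandoutwedgesequal} holds and $\hpi(w)=\lambda(w)/Z$ is invariant for $\hY$. We set
\begin{align*}
\vpi(e)\coloneqq\sum_{w\in\mathrm{IN}(e)}\frac{\lambda(w)}{Z},
\end{align*}
as in \eqref{eq:pullback wedges to edges}.

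\textbf{Invariance of $\vpi$.} A short computation gives
\begin{align*}
\sum_{e}\vpi(e)\vP(e,e')=\sum_{w\in \mathrm{IN}(e')}\frac{\lambda(w)}{Z}\cdot\frac{\lambda(w'')}{\lambda(w'')}\cdots
\end{align*}
It is cleaner to argue as follows. For a wedge $w=(e_1,e_2)$ the transition probability is $\vP(e_1,e_2)=p(w)=\lambda(w)/S(e_1)$, where $S(e_1)=\sum_{w'\in\mathrm{ON}(w)}\lambda(w')$ depends only on $e_1$. By \eqref{eq:inandoutwedgesequal}, $S(e_1)=\sum_{w'\in\mathrm{IN}(e_1)}\lambda(w')=Z\vpi(e_1)$. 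Hence
\begin{align*}
\vpi(e_1)\vP(e_1,e_2)=\frac{\lambda(w)}{Z}=\hpi(w),
\end{align*}
which is exactly \eqref{eq:relation edges to wedges}. Summing over $e_1$ with $e_1(2)=e_2(1)$ yields $\sum_{e_1}\vpi(e_1)\vP(e_1,e_2)=\vpi(e_2)$, so $\vpi$ is invariant.

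\textbf{Proof of \eqref{eq:antireversibilityedges}.} Fix $e,e'$ with $(e,e')=w\in W$; otherwise both sides vanish, since $(-e',-e)\in W$ iff $(e,e')\in W$. The left side equals $\lambda(w)/Z$ by the identity above. The right side equals $\vpi(-e')\vP(-e',-e)$ only after $\vpi(e')$ is replaced by $\vpi(-e')$, and that replacement needs \eqref{eq:regular opposite edges equal stat}, so we prove that first.

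\textbf{Proof of \eqref{eq:regular opposite edges equal stat}.} For $e=(u,v)$, the set $\mathrm{IN}(e)$ is $\{(x,u,v):x\sim u\}$. It contains one flat wedge ($x=v$), $t(e)$ triangle wedges, where $t(e)$ is the number of common neighbours of $u$ and $v$, and $d_u-1-t(e)$ open wedges. Therefore
\begin{align*}
Z\vpi(e)=\alpha+\beta t(e)+\gamma\,(d_u-1-t(e)),
\end{align*}
and $t(e)=t(-e)$ together with $d_u=d_v=d$ gives $\vpi(e)=\vpi(-e)$.

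\textbf{Conclusion of \eqref{eq:antireversibilityedges}.} We now have
\begin{align*}
\vpi(e')\vP(-e',-e)=\vpi(-e')\vP(-e',-e)=\hpi(-w)=\frac{\lambda(-w)}{Z}=\frac{\lambda(w)}{Z}
\end{align*}
by \eqref{eq:wedgefact1}, which equals the left side. The only delicate point is the use of regularity: it enters both through \eqref{eq:inandoutwedgesequal}, giving $S(e)=Z\vpi(e)$, and through $d_u=d_v$ in the symmetry $\vpi(e)=\vpi(-e)$. Beyond that, no irreducibility assumption is needed for the identity itself; it only matters if one wants uniqueness of $\vpi$.
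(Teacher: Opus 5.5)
Your proof is correct, and its core matches the paper's. You define $\vpi$ as the pullback of $\lambda/Z$, use \eqref{eq:inandoutwedgesequal} to turn the left side of \eqref{eq:antireversibilityedges} into $\lambda((e,e'))/Z$, and finish with $\lambda(w)=\lambda(-w)$. The differences are in how you handle the right side and the symmetry.

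You say the right side needs $\vpi(e')=\vpi(-e')$, but it does not. The paper uses the identity $\mathrm{ON}((-e',\cdot))=\{-w: w\in\mathrm{IN}(e')\}$, which holds on every graph. It gives your $S(-e')=\sum_{\tilde w\in\mathrm{IN}(e')}\lambda(\tilde w)=Z\vpi(e')$, hence $\vpi(e')\vP(-e',-e)=\lambda((-e',-e))/Z$ with no regularity at all. Regularity therefore enters only once, through Eulerianity at $e$ on the left side.

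The paper then gets $\vpi(e)=\vpi(-e)$ structurally, by chaining that reversal identity with \eqref{eq:inandoutwedgesequal} at $-e$. You instead count $\mathrm{IN}(e)$ to get $Z\vpi(e)=\alpha+\beta t(e)+\gamma(d_u-1-t(e))$.

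What each route buys:
\begin{itemize}
\item Your route is a detour, but the explicit formula shows exactly where $d_u=d_v$ is used: the two values differ by $\gamma(d_u-d_v)/Z$.
\item Because you define $\vpi$ by a formula and check everything directly, you never need Lemma~\ref{lem:relation inv measures on E and W}. Its irreducibility and aperiodicity hypotheses are implicitly invoked by the paper's proof but are not among the corollary's assumptions.
\item The paper's route is shorter and makes clear that, for this $\vpi$, only the left-hand identity requires regularity.
\end{itemize}

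Finally, delete the abandoned line ``A short computation gives \dots'' from your write-up.
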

In the next lemma we investigate the relationship between the edge detailed balance and the wedge detailed balance conditions for the node2vec random walk on an arbitrary graph.
\begin{lemma}[Relation of edge and wedge directed detailed balance]\label{lem:directed detailed balance relations}
    Let $G$ be connected. If a vector $\vpi$ satisfies~\eqref{eq:antireversibilityedges}, then
    \begin{itemize}
        \item[$(1a)$] $\vpi$ is invariant, i.e., $\vpi\vP = \vpi$,
        \item[$(1b)$] if $\alpha >0$, $\vpi(e) = \vpi(-e)$ for all $e\in \vE$,
        \item[$(1c)$] if $\alpha =0$, then 
        $\vpi(e) = \vpi(-e)$ for all $e\in \vE$ if and only if
        \begin{align}\label{eq:eulerian cycle condition}
            \prod\limits_{i=1}^{n} \sum\limits_{w\in \mathrm{ON}((e_i,\cdot))}\lambda(w) = \prod\limits_{i=1}^{n} \sum\limits_{w\in \mathrm{IN}((e_i,\cdot))}\lambda(w)
        \end{align}
        holds for all cycles $e_1,\ldots,e_{n}$ with $n> 2$ such that $e_i \in \mathrm{IN}(e_{i+1})$ for all $i\in [n-1]$ and $e_{n} \in \mathrm{IN}(e_1)$,
        \item[$(1d)$] if $\vpi(e) = \vpi(-e)$ for all $e\in \vE$, \eqref{eq:antireversibility} holds for some vector $\hpi$ for which $\hpi(w)=\hpi(-w)$ for all $w\in W$.
    \end{itemize}
    If a vector $\hpi$ satisfies~\eqref{eq:antireversibility}, then
    \begin{itemize}
        \item[$(2a)$] $\hpi$ is invariant, i.e., $\hpi\hP = \hpi$,
        \item[$(2b)$] if $\alpha >0$, then $\hpi(w) = \hpi(-w)$ for all $w\in W$.
    \end{itemize}
\end{lemma}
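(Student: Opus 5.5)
The plan rests on one observation. Write $S_{\mathrm{out}}(e)\coloneqq\sum_{w\in \mathrm{ON}((e,\cdot))}\lambda(w)$ for the total weight of wedges whose first edge is $e$, and $S_{\mathrm{in}}(e)\coloneqq \sum_{w\in \mathrm{IN}(e)}\lambda(w)$ for the total weight of wedges whose second edge is $e$. Reversal $w\mapsto -w$ is a bijection between wedges starting with $-e$ and wedges ending in $e$, and it preserves types by \eqref{eq:wedgefact1}. Hence $S_{\mathrm{out}}(-e)=S_{\mathrm{in}}(e)$. For a wedge $w=(e,e')$ this gives $\vP(e,e')=\lambda(w)/S_{\mathrm{out}}(e)$ and $\vP(-e',-e)=\lambda(w)/S_{\mathrm{in}}(e')$. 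So \eqref{eq:antireversibilityedges} is equivalent to the local relation
\[
\frac{\vpi(e)}{S_{\mathrm{out}}(e)}=\frac{\vpi(e')}{S_{\mathrm{in}}(e')}\qquad\text{whenever } e\in\mathrm{IN}(e').
\]
Everything below is either this identity or a direct substitution.

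For (1a), I sum \eqref{eq:antireversibilityedges} over $e\in \mathrm{IN}(e')$. As $e$ runs over $\mathrm{IN}(e')$, $-e$ runs over all successors of $-e'$, so $\sum_e\vP(-e',-e)=1$ and $(\vpi\vP)(e')=\vpi(e')$. Statement (2a) is identical: sum \eqref{eq:antireversibility} over $w\in\mathrm{IN}(w')$. The map $w\mapsto -w$ sends $\mathrm{IN}(w')$ bijectively onto the set of states reachable from $-w'$ in one step, so $\sum_w \hP(-w',-w)=1$.

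For (1b), I take $e'=-e$ in \eqref{eq:antireversibilityedges}, which gives $\vpi(e)\vP(e,-e)=\vpi(-e)\vP(e,-e)$. Since $\alpha>0$, the flat transition has $\vP(e,-e)>0$, and the claim follows.

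For (2b), let $w=(a,b,c)$ and $w'=(b,c,b)\in W_-$, so that $-w'=w'$ and $\hP(w,w')=p(w')>0$. Apply \eqref{eq:antireversibility} to the pair $(w,w')$ and then to the pair $(w',-w)$:
\[
\hpi(w)\hP(w,w')=\hpi(w')\hP(w',-w)=\hpi(-w)\hP(w,w').
\]
Cancelling $\hP(w,w')>0$ gives $\hpi(w)=\hpi(-w)$.

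For (1d), I define $\hpi(w)\coloneqq\vpi(e_1(w))\vP(e_1(w),e_2(w))$, as in \eqref{eq:relation edges to wedges}. Take $w=(e_1,e_2)$ and $w'=(e_2,e_3)$. The left side of \eqref{eq:antireversibility} is $\vpi(e_1)\vP(e_1,e_2)\vP(e_2,e_3)$. The right side is $\vpi(e_2)\vP(e_2,e_3)\vP(-e_2,-e_1)$. These agree by \eqref{eq:antireversibilityedges} applied to $(e_1,e_2)$. For symmetry, $\hpi(-w)=\vpi(-e_2)\vP(-e_2,-e_1)$. Using $\vpi(-e_2)=\vpi(e_2)$ and \eqref{eq:antireversibilityedges} again, this equals $\vpi(e_1)\vP(e_1,e_2)=\hpi(w)$.

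The main obstacle is (1c). I would first show that $g(e)\coloneqq\vpi(-e)$ satisfies the same local relation as $\vpi$. Apply the displayed identity to the wedge $(-e',-e)$ and use $S_{\mathrm{out}}(-e')=S_{\mathrm{in}}(e')$. This turns the edge-level equality $\vpi=g$ into a statement about the multiplicative cocycle $S_{\mathrm{in}}(e_{i+1})/S_{\mathrm{out}}(e_i)$ that propagates $\vpi$ along a non-backtracking path $e_1\to\dots\to e_n$. Iterating the local relation around a closed non-backtracking cycle, the quantities $\vpi(e_i)$ and $\vpi(-e_i)$ are determined by the products of $S_{\mathrm{in}}$'s and $S_{\mathrm{out}}$'s, traversed in opposite orientations. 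So equality $\vpi(e_i)=\vpi(-e_i)$ along the cycle should reduce to \eqref{eq:eulerian cycle condition}.

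Conversely, assuming \eqref{eq:eulerian cycle condition} for every cycle, I would define $\vpi(e)/\vpi(-e)$ by propagating along paths and show that it is path-independent and equal to $1$. The delicate point is that when $\alpha=0$ one cannot close paths by backtracking, so the argument must be done purely with cycles of length $n>2$. I also need to take care with the interpretation of $\mathrm{IN}((e_i,\cdot))$ in \eqref{eq:eulerian cycle condition} as $S_{\mathrm{in}}(e_i)$. This is the step I would expect to require the most care.
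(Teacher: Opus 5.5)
Your arguments for (1a), (1b), (2a), (2b) and (1d) are correct and essentially the paper's. In (1d) your check of \eqref{eq:antireversibility} is slightly leaner than the paper's: it needs a single application of \eqref{eq:antireversibilityedges} and uses $\vpi(e)=\vpi(-e)$ only for the symmetry $\hpi(w)=\hpi(-w)$. One caveat on your opening reformulation: for $\alpha=0$ it is valid only for non-backtracking pairs $e'\neq -e$. For $e'=-e$ the edge balance reads $0=0$, whereas the local relation would assert exactly the symmetry $\vpi(e)=\vpi(-e)$ that (1c) is about.

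The genuine gap is (1c), which you leave as a plan whose central mechanism cannot work. Iterating the local relation around a closed non-backtracking cycle $e_1\to\dots\to e_n\to e_1$ brings you back to $e_1$ and yields only $\vpi(e_1)=\vpi(e_1)\prod_i S_{\mathrm{in}}(e_i)/\prod_i S_{\mathrm{out}}(e_i)$. So for $\vpi>0$ the closed-cycle product identity follows from \eqref{eq:antireversibilityedges} alone, and no value $\vpi(-e_i)$ ever enters. (On a cycle graph with $\alpha=0$, any $\vpi$ that is constant on each of the two orientations satisfies \eqref{eq:antireversibilityedges} and every closed-cycle identity, yet the two constants may differ.) The missing ingredient, which the paper uses, is a non-backtracking walk from an edge to its own reversal, $e_0\to e_1\to\dots\to e_n\to -e_0$. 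Such walks exist when $\vP$ is irreducible, not merely when $G$ is connected. Chaining \eqref{eq:antireversibilityedges} along it gives $\vpi(e_0)\vP(e_0,e_1)\cdots\vP(e_n,-e_0)=\vpi(-e_0)\vP(e_0,-e_n)\vP(-e_n,-e_{n-1})\cdots\vP(-e_1,-e_0)$. The numerators cancel because $\lambda(w)=\lambda(-w)$, the two factors $S_{\mathrm{out}}(e_0)$ cancel, and $S_{\mathrm{out}}(-e_i)=S_{\mathrm{in}}(e_i)$ leaves
\[
\frac{\vpi(-e_0)}{\vpi(e_0)}=\frac{\prod_{i=1}^n S_{\mathrm{in}}(e_i)}{\prod_{i=1}^n S_{\mathrm{out}}(e_i)}.
\]
So the product condition that controls the symmetry runs over the intermediate edges of such a walk; this is the form the paper's proof actually establishes. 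Your own observation that $g(e)=\vpi(-e)$ obeys the same local relation would also have closed the gap. Dividing the relations for the pairs $(e,e')$ and $(-e',-e)$ gives $\vpi(-e')/\vpi(e')=\vpi(-e)/\vpi(e)$ for every non-backtracking transition $e\to e'$. Along a walk from $e$ to $-e$ the ratio $r=\vpi(-e)/\vpi(e)$ therefore satisfies $r=1/r$, hence $\vpi(e)=\vpi(-e)$ whenever $\vpi>0$, and the display then shows that the product condition holds too. As written, however, neither direction of (1c) is proved, and the proposed path-independence argument for the converse is too vague to check.
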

This characterizes the precise relationship between directed detailed balance, invariance, and symmetry properties of stationary measures for node2vec. In particular, it shows that directed detailed balance is a strong condition: whenever it holds, invariance follows automatically, and for $\alpha>0$ it enforces symmetry under direction reversal at both the edge and wedge levels. 

\begin{remark}
    If $G$ is a finite regular graph, then $\hpi(w)=\hpi(-w)$ is trivially fulfilled, since $\hpi(w) = \frac{\lambda(w)}{Z}$ and $\lambda(w)=\lambda(-w)$. However, the reverse implication does not hold and wedge directed detailed balance does not imply that $G$ is regular, as we will see in Example~\ref{ex:clique with missing edge}.
\end{remark}
We next show that, on regular graphs, the global recurrence behavior of node2vec coincides with that of the simple random walk (SRW). 
We recall that $X$ on $G$ is called \emph{recurrent} if every node $v\in V$ is visited infinitely often almost surely.

For the recurrence result, we require the following geometric condition on the graph: there exists $R>0$ such that every ball of radius $R$ contains at least one cycle, i.e.,
\begin{equation}\label{eq:condition every ball contains cycle}
    \exists R>0:~\forall x\in V \ \exists \text{ cycle } C \subset V \text{ such that } C \subset B_R(x) \coloneqq \{y\in V \mid d_G(x,y)\le R\}.
\end{equation}

\begin{theorem}[node2vec recurrence criterion]\label{thm:recurrence}
    Let $G$ be an infinite connected graph with bounded degree. Let $X$ be a node2vec random walk with parameters $\alpha \ge 0$ and $\beta, \gamma >0$ on $G$. Assume that~\eqref{eq:antireversibilityedges} is fulfilled for a vector $\vpi$. If $\alpha = 0$, additionally assume $d_v \ge 2$ for all $v\in V$, $\vpi(e) = \vpi(-e)$ for all $e\in \vE$ and that there exists $R>0$ such that the cycle condition \eqref{eq:condition every ball contains cycle} is fulfilled. \\
    Then, $X$ is recurrent on $G$ if and only if the SRW is recurrent on $G$.
\end{theorem}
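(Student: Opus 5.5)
We follow the strategy of \cite{hermon_reversibility_2019}: compare the lifted chain $\vv{Y}$ on $\vE$, via electrical network theory, with the simple random walk on $G$.

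\textbf{Step 1 (reduction to the edge chain).} We show that $X$ is recurrent if and only if $\vv{Y}$ is recurrent on $\vE$. Since degrees are bounded, from any directed edge ending at $v$ the walk enters any prescribed directed edge out of $v$ within a bounded number of steps, with probability bounded below uniformly. For $\alpha>0$ this holds directly, since every wedge type has positive weight. For $\alpha=0$ we use $d_v\ge 2$ together with the cycle condition \eqref{eq:condition every ball contains cycle}: the walk can travel to a cycle within distance $R$, go around it, and come back reversed. This gives a uniform turning-around mechanism, which replaces backtracking.

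\textbf{Step 2 (reversibilization).} Under \eqref{eq:antireversibilityedges} together with $\vpi(e)=\vpi(-e)$ (which holds automatically for $\alpha>0$ by Lemma~\ref{lem:directed detailed balance relations}(1b), and is assumed for $\alpha=0$), we define the involution $\iota(e)=-e$. The relation reads $\vpi(e)\vP(e,e')=\vpi(-e')\vP(-e',-e)$, so $\vP$ is $\vpi$-reversible up to the involution $\iota$, which preserves $\vpi$. Following \cite{hermon_reversibility_2019}, recurrence of the nonreversible $\vP$ is then equivalent to recurrence of the reversible chain $Q=\frac12(\vP+\vP^*)$, where $\vP^*(e,e')=\vpi(e')\vP(e',e)/\vpi(e)$. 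Indeed, $\vP^*=\iota\vP\iota$, so the time reversal is a conjugate of the chain itself, and escape probabilities and Green functions of $\vP$, $\vP^*$ and $Q$ are comparable. Concretely, we use the Dirichlet principle for nonreversible chains: capacities of $\vP$ and $\vP^*$ coincide and are bounded above by (a constant times) those of the additive symmetrization. It is at this step that directed detailed balance is essential.

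\textbf{Step 3 (comparison with SRW).} The network $(\vE,c)$ with conductances $c(e,e')=\vpi(e)\vP(e,e')+\vpi(e')\vP(e',e)$ is compared with $G$ by rough isometry. The projection $e\mapsto e(1)$ is a quasi-isometry because degrees are bounded. Conductances are bounded above and below, since $\beta,\gamma>0$ and the transition probabilities are bounded below. For $\vpi$ we need $\vpi$ bounded above and below on $\vE$. We obtain this from \eqref{eq:antireversibilityedges} applied along paths of bounded length: neighbouring edges have comparable masses by uniform ellipticity. For $\alpha=0$, the cycle condition ensures that $e$ and $-e$, as well as edges sharing a vertex, are joined by $\vP$-paths of length at most $C(R)$. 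The difficulty is that comparability along bounded paths alone does not give global bounds. Here we use $\vpi(e)=\vpi(-e)$ combined with EDB: summing $\vpi(e)\vP(e,e')=\vpi(e')\vP(-e',-e)$ over $e'$ shows that $\vpi(e)$ is controlled by the mass flowing through the reversed edge, which gives a local harmonic-type balance. If only local comparability can be proven, we will still argue that recurrence, via the Nash–Williams and Thomson principles, is invariant under bounded perturbations of conductances in which $\vpi$ enters only through the ratio structure. We expect this step, obtaining uniform two-sided bounds on $\vpi$ (or circumventing them), to be the main obstacle, particularly for $\alpha=0$.

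With all three steps in place, recurrence of $Q$ is equivalent to recurrence of the SRW on $G$ by rough-isometry invariance (Kanai / Lyons–Peres). Combining this with Steps 1 and 2 proves the theorem.
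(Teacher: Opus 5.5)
\textbf{Step 2 does not go through as stated.} In general the Dirichlet principle for non-reversible chains only gives $\mathrm{cap}_S\le\mathrm{cap}_{\vP}$ for $S=\tfrac12(\vP+\vP^*)$. That yields only one direction: recurrence of $\vP$ implies recurrence of $S$. The inequality you assert for the converse, $\mathrm{cap}_{\vP}\le C\,\mathrm{cap}_S$, is a sector condition, and it does not follow from $\vP^*=\iota\vP\iota$. The non-backtracking walk on $\mathbb{Z}$ satisfies \eqref{eq:antireversibilityedges} with uniform $\vpi$, so $\vP^*=\iota\vP\iota$. Yet $\vP$ is transient (it moves deterministically), while $S$ is two copies of the simple random walk on $\mathbb{Z}$.

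So the uniform turning-around mechanism has to enter the comparison itself, whereas you use it only for irreducibility and geometry. For $\alpha>0$ and bounded $\vpi$ one can plausibly supply it: the antisymmetric part of the flux is carried by the $4$-cycles $(u,v)\to(v,w)\to(w,v)\to(v,u)\to(u,v)$ with weights $\vpi(u,v)\vP((u,v),(v,w))$, i.e.\ a bounded stream function. But that must be proved.

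The paper (following \cite{hermon_reversibility_2019}, which does not symmetrize additively) instead proceeds as follows. It extracts a uniform backtracking probability $p$: directly when $\alpha>0$, and when $\alpha=0$ via the averaged kernel $D=\frac{1}{M+1}\sum_{i=0}^M\vP_0^i$, which has $D(e,-e)\ge 2p$ by the cycle condition, together with Lemma~8.1 there. Bernoulli$(p)$ coins then force backtracks of the lazy chain, and the undirected edge is recorded at those times. The resulting chain $Q$ on $\bE$ is reversible with respect to $\bpi(\{e,-e\})=2\vpi(e)$, using only \eqref{eq:antireversibilityedges} and $\vpi(e)=\vpi(-e)$. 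It is recurrent iff $X$ is, by subsampling, and no sector condition is needed.

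\textbf{The Step 3 obstacle is genuine, and your fallback cannot work.} Recurrence is stable only under changes of conductances by bounded factors, and two-sided bounds on $\vpi$ do not follow from the hypotheses. Take $\alpha>0$, write $\vP((u,v),(v,w))=\lambda(u,v,w)/\Sigma(u,v)$ with $\Sigma(u,v)=\alpha+\beta t_{uv}+\gamma(d_v-1-t_{uv})$ and $t_{uv}=|N(u)\cap N(v)|$. Then \eqref{eq:antireversibilityedges} becomes $\vpi(u,v)/\Sigma(u,v)=\vpi(v,w)/\Sigma(w,v)$. Taking $w=u$ gives symmetry, hence $\vpi(u,v)=c_v\Sigma(u,v)$ with $c_v/c_u=\Sigma(v,u)/\Sigma(u,v)$ on every edge. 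On regular graphs $c$ is constant, but in general it can be unbounded.

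Counterexample: take the bi-infinite necklace whose $k$-th bead is a triangle $\{a_k,x_k,y_k\}$ glued at $x_k$ to a $K_4$ on $\{x_k,b_k,z_k,z_k'\}$, with bridges $\{b_k,a_{k+1}\}$. Every cycle lies in one clique, and all edges of a clique have the same $t$, so $c$ is well defined. For $(\alpha,\beta,\gamma)=(1,2,1)$ one has $\Sigma(u,v)=d_v+t_{uv}$ and $c_{a_{k+1}}/c_{a_k}=\frac46\cdot\frac76\cdot\frac43=\frac{28}{27}$.

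Since beads are finite, the restriction of $\vpi$ to bridge edges is invariant for the bridge-crossing chain. That chain is a persistent walk with persistence probabilities $r$ (rightwards) and $\ell$ (leftwards). An invariant measure that is symmetric on each bridge and grows like $(28/27)^k$ forces $r/\ell=28/27$. So $X$ is transient, while the simple random walk on this quasi-one-dimensional graph is recurrent.

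Hence the statement as written needs an extra hypothesis such as $0<\inf\vpi\le\sup\vpi<\infty$, which is automatic on regular graphs. The paper's Step (iii) does not supply this either. Lemma~\ref{lem:recurrence equivalence with different conductances} as stated asks only for positive transition probabilities, which simple versus biased walk on $\mathbb{Z}$ shows is insufficient. The comparison actually requires conductances comparable up to constants, and those of $Q$ contain $\bpi$.
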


In the proof of Theorem \ref{thm:recurrence}, we use the relation~\eqref{eq:antireversibilityedges} to construct a reversible Markov chain $Q$ on the undirected edges $\bE$ that is a subsampling of a lazy version of $\vv{Y}$, only making jumps in case the lazy version of $\vv{Y}$ attempts to backtrack. Since $Q$ was built from $\vv{Y}$, it can be shown that one is recurrent if and only if the other is recurrent. By using the reversibility of $Q$, we can apply electrical network theory to compare it to the SRW to conclude the proof.
The proofs for $\alpha >0$ and $\alpha = 0$ are quite different because the construction of $Q$ depends on the returns from a directed edge $e$ to its reversal $-e$. The probability $\vP(e,-e)$ is only positive for $\alpha >0$, which means that the case $\alpha = 0$ needs a more elaborate construction.

Since the SRW is recurrent on the infinite triangular lattice, we obtain the following corollary from the previous theorem.
\begin{corollary}[Recurrence of node2vec on infinite triangular lattice]
    Let $G$ be the infinite triangular lattice and let $X$ be a node2vec random walk with $\alpha \ge 0$ and $\beta,\gamma >0$. Then, $X$ is recurrent on $G$.
\end{corollary}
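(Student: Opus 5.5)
The corollary follows from Theorem~\ref{thm:recurrence} once its hypotheses are checked for the infinite triangular lattice $G$, together with the classical fact that the SRW on the triangular lattice is recurrent. The lattice is a planar, $6$-regular graph of polynomial growth of order $2$. Its recurrence can be obtained from P\'olya's theorem through a rough-isometry comparison with $\mathbb{Z}^2$. Alternatively, one can note that the triangular lattice is $\mathbb{Z}^2$ with one diagonal added to every square, and use the Nash-Williams criterion: the cutsets given by the edges leaving the boxes $[-n,n]^2$ have size $O(n)$, so $\sum_n 1/|\Pi_n|=\infty$. The geometric hypotheses are immediate. $G$ is infinite and connected, it has bounded degree $6$, and $d_v=6\ge 2$ for every $v$. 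Every vertex lies on a triangle, so the cycle condition \eqref{eq:condition every ball contains cycle} holds with $R=1$.

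The substantive step is to produce a vector $\vpi$ satisfying \eqref{eq:antireversibilityedges} with $\vpi(e)=\vpi(-e)$. Theorem~\ref{thm:statdistonregulargraph} and Corollary~\ref{cor:anti-rev on edges} are stated only for finite graphs, so I would redo the construction directly on the infinite lattice. The plan has three steps.

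First, I verify the weighted Eulerianity condition \eqref{eq:inandoutwedgesequal} locally. For $w=(v_1,v_2,v_3)$, the out-sum over $\mathrm{ON}(w)$ depends only on the directed edge $(v_1,v_2)$. It equals $\alpha+\beta\,t(v_1,v_2)+\gamma\,(d-1-t(v_1,v_2))$, where $t(v_1,v_2)$ is the number of common neighbours of $v_1$ and $v_2$. The in-sum over $\mathrm{IN}(w)$ is the same expression with $t(v_2,v_1)=t(v_1,v_2)$, because the lattice is regular. This is exactly the local computation behind the ``if'' direction of Theorem~\ref{thm:statdistonregulargraph}, and it does not use finiteness. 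On the lattice, moreover, $t\equiv 2$, so the common value is the constant $S=\alpha+2\beta+3\gamma$.

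Second, I set $\hpi(w)=\lambda(w)$, an unnormalised measure. Then
\[
\hpi(w)\hP(w,w')=\lambda(w)\lambda(w')/S=\hpi(w')\hP(-w',-w)
\]
whenever $w\in\mathrm{IN}(w')$, using $\lambda(-w)=\lambda(w)$ and the fact that $-w\in\mathrm{IN}(-w')$. So \eqref{eq:antireversibility} holds.

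Third, I define the edge measure by \eqref{eq:pullback wedges to edges}, namely $\vpi(e)=\sum_{w\in\mathrm{IN}(e)}\lambda(w)=S$, a constant. Then $\vpi(e)=\vpi(-e)$ is trivial, and
\[
\vpi(e)\vP(e,e')=\lambda((e,e'))=\lambda((-e',-e))=\vpi(e')\vP(-e',-e),
\]
which is \eqref{eq:antireversibilityedges}.

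With these checks, Theorem~\ref{thm:recurrence} applies in both cases, $\alpha>0$ and $\alpha=0$. Since the SRW on $G$ is recurrent, $X$ is recurrent.

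The only step that needs real care is the second point, that is, extending to an infinite graph what the paper proves for finite regular graphs. Theorem~\ref{thm:recurrence} only asks for some vector $\vpi$, not a probability measure, so the constant measure is admissible and the extension is straightforward.
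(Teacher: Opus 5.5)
Your proposal is correct and follows the paper's route: the paper obtains the corollary in one line from Theorem~\ref{thm:recurrence} and the recurrence of the SRW on the triangular lattice, tacitly relying on regularity to supply \eqref{eq:antireversibilityedges}, even though Theorem~\ref{thm:statdistonregulargraph} and Corollary~\ref{cor:anti-rev on edges} are stated only for finite graphs. Your direct check that the constant vector $\vpi\equiv\alpha+2\beta+3\gamma$ satisfies \eqref{eq:antireversibilityedges} and $\vpi(e)=\vpi(-e)$ on the infinite lattice is exactly the verification the paper leaves out; the wedge step is not actually needed for it, and the inclusion you invoke there should read $-w'\in\mathrm{IN}(-w)$, not $-w\in\mathrm{IN}(-w')$.
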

\section{Discussion and Examples}\label{sec:Examples}

In this section, we discuss some examples to illustrate the behavior of the node2vec random walk and we provide a counterexample for the relationship between the weighted Eulerianity~\eqref{eq:inandoutwedgesequal} and the directed detailed balance conditions. Example~\ref{ex:bistoch of alpha-brw}, Example~\ref{ex:bistoch on E} and Lemma~\ref{lem:bistoch on W} demonstrate that the analysis of the invariant measure of the node2vec random walk cannot be reduced to a counting problem when $\beta\neq\gamma$, in contrast with the non-backtracking and $\alpha$-backtracking cases.
Concretely, we investigate under which conditions the kernels $\vP$ and $\hP$ are bistochastic and what that means for their stationary distribution. For that, we assume throughout all three examples that $G$ is connected and finite and that $\vP$ and $\hP$ are irreducible and aperiodic. Following our conventions, we denote the stationary distribution of the node2vec random walk lifting on $\vE$ by $\vpi$ and on $W$ by $\hpi$. As a guiding motivation, in our first example we recall a well-known result on the $\alpha$-backtracking random walk~\cite{meng_analysis_2020}.

\begin{example}[Bistochasticity of $\alpha$-backtracking random walk]\label{ex:bistoch of alpha-brw}
Consider the special node2vec random walk with parameters $\alpha > 0, \beta = \gamma$, called $\alpha$-backtracking random walk, whose transition matrix on $\vE$ is denoted by $\vP_\alpha$. Note that for $\alpha = \beta = \gamma$, we retrieve the SRW and for $\alpha = 0$, we retrieve the non-backtracking random walk. Checking the column sums of the $\alpha$-backtracking random walk
\begin{align}
    \sum\limits_{e\in \vE}\vP_\alpha(e,e') = \frac{\alpha}{\alpha + (d_{e'(1)}-1)\beta} + \frac{(d_{e'(1)}-1)\beta}{\alpha + (d_{e'(1)}-1)\beta} = 1.
\end{align}
for all $e,e'\in \vE$, we see that $\vP_\alpha$ is bistochastic on $\vE$.
Now, if we consider the invariant equation $\vpi = \vpi \vP_\alpha$ and choose $\vpi = \tfrac{1}{|\vE|}$ as the uniform measure on $\vE$, we obtain
\begin{nalign}\label{eq:bistochastic equivalent to uniform measure}
   \frac{1}{|\vE|} = \sum\limits_{e\in \vE}\frac{1}{|\vE|} \vP_\alpha(e,e') = \frac{1}{|\vE|}\sum\limits_{e\in \vE} \vP_\alpha(e,e')
\Longleftrightarrow \sum\limits_{e\in \vE} \vP_\alpha(e,e') = 1
\end{nalign}
which means that $\vpi$ is the uniform distribution if and only if $\vP_\alpha$ is bistochastic (the same argument also holds for general Markov chains). We conclude that the $\alpha$-backtracking random walk has stationary distribution $\vpi(e) = \tfrac{1}{|\vE|}$ for all $e\in \vE$. Setting $\alpha = 0$ in the above calculations leads to the same stationary distribution for the non-backtracking random walk. To get the stationary distribution on the state space of nodes $V$, we apply the pullback $\pi(v) = \sum_{e\in \mathrm{IN}(v)} \vpi(e) = \tfrac{d_v}{2|\vE|}$ which matches the stationary distributions of the SRW. 
\end{example}
This leads to the natural questions: For which other graphs and parameter choices is $\vpi$ uniform? Moreover, when is $\hpi$ uniform? Regarding the first question, we consider the following example:
\begin{figure}[tbp]
    \centering
    \includegraphics[width=0.2\linewidth]{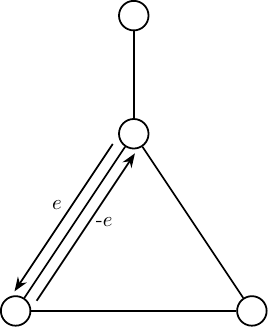}
    \caption{Example graph for studying bistochasticity on the directed edges}
    \alt{Illustration of a four-node graph that is a triangle with one outside arm.}
    \label{fig:bistochasticity example}
\end{figure}
\begin{example}[Bistochasticity of $\vP$]\label{ex:bistoch on E}
Let $\vP$ denote the transition matrix of the edge Markovian representation of the node2vec random walk with parameters $\alpha,\beta,\gamma>0$. To determine when the invariant measure $\vpi$ is uniform, we examine the bistochasticity of $\vP$.

Consider the graph $G'$ shown in Figure~\ref{fig:bistochasticity example}, which consists of a triangle with a single edge attached to it. Following the criterion in \eqref{eq:bistochastic equivalent to uniform measure}, we compute, for a fixed edge $e'\in \vE$, the column sum
\begin{nalign}
    \sum_{e\in \vE} \vP(e,e')
    &= \vP(-e',e') + \vP(e_1,e') + \vP(e_2,e') \\
    &= \frac{\alpha}{\alpha+\beta+\gamma}
       + \frac{\gamma}{\alpha+2\gamma}
       + \frac{\beta}{\alpha+\beta+\gamma},
\end{nalign}
where $e_1$ and $e_2$ denote the two incoming edges of the triangle distinct from $-e'$.

For $\beta,\gamma>0$, this sum equals $1$ if and only if $\beta=\gamma$, corresponding to the $\alpha$-backtracking random walk. Hence, $\vP$ is bistochastic, and therefore admits the uniform distribution as a stationary measure, only in this case.

Finally, the same argument applies to any graph that contains $G'$ as an induced subgraph, a property satisfied by most graphs arising in practical applications. For the case $\alpha = 0$ a similar graph can be constructed.
\end{example}

As seen in the previous example, if we consider graphs that contain a triangle, the stationary distribution on directed edges is uniform only if $\beta = \gamma$, in other words if there is no distinction between forward moves and triangle moves. The following lemma investigates when the stationary distribution $\hpi$ on $W$ is uniform.
\begin{lemma}[Bistochasticity of $\hP$]\label{lem:bistoch on W}
Let $\hP$ be the transition matrix of the wedge Markovian representation of the node2vec random walk with $\alpha \ge 0$ and $\beta, \gamma > 0$. Then, $\hpi$ is the uniform distribution on $W$ if and only if
\begin{align}\label{eq:bistoch of P hat}
    p(w) = |\mathrm{IN}(w)|^{-1}.
\end{align}
\end{lemma}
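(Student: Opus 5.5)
The argument is a direct column-sum computation, mirroring \eqref{eq:bistochastic equivalent to uniform measure} in Example~\ref{ex:bistoch of alpha-brw}. We work under the standing assumptions of this section: $G$ finite and connected, $\hP$ irreducible and aperiodic. Theorem~\ref{thm:ergodicity} then gives a unique stationary distribution $\hpi$, and it is characterized by the simplified invariant equation \eqref{eq:simplified invariant equation wedges}. So $\hpi$ is uniform if and only if the constant vector $c=|W|^{-1}$ satisfies \eqref{eq:simplified invariant equation wedges}. The uniqueness statement supplies both directions: if the uniform vector is invariant, it must equal $\hpi$.

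We therefore plug $\hpi\equiv c$ into \eqref{eq:simplified invariant equation wedges}. By \eqref{eq:wedge transition definition}, $\hP(w',w)=p(w)\indic_{\{w'\in\mathrm{IN}(w)\}}$, so the column sum of $\hP$ at $w$ is
\[
\sum_{w'\in W}\hP(w',w)=p(w)\,|\mathrm{IN}(w)|.
\]
The invariance equation for the constant vector reads $c=p(w)\sum_{w'\in\mathrm{IN}(w)}c=c\,p(w)|\mathrm{IN}(w)|$. Since $c>0$, this holds if and only if $p(w)|\mathrm{IN}(w)|=1$ for every $w\in W$, which is exactly \eqref{eq:bistoch of P hat}. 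Equivalently, $\hP$ is bistochastic precisely under \eqref{eq:bistoch of P hat}.

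We also check that the condition is well posed, i.e.\ that $\mathrm{IN}(w)\neq\emptyset$ for every $w$. Any $w$ has the flat in-neighbor $(w(2),w(1),w(2))$, because $\{w(1),w(2)\}\in\bE$. (When $\alpha=0$, irreducibility already forces every state to have an in-neighbor with positive transition probability.) Irreducibility also gives $p(w)>0$, so both sides of \eqref{eq:bistoch of P hat} are positive and finite.

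There is no real obstacle. The only points needing care are invoking the uniqueness from Theorem~\ref{thm:ergodicity} to get the converse direction, and observing that the transition into $w$ does not depend on the predecessor $w'$, which is what reduces the column sum to $p(w)|\mathrm{IN}(w)|$.
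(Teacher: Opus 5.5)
Your proposal is correct and follows essentially the same route as the paper: the column sum of $\hP$ at $w$ equals $p(w)\,|\mathrm{IN}(w)|$ because the transition probability into $w$ does not depend on the predecessor, and the uniform vector is stationary exactly when $\hP$ is bistochastic. Your explicit appeal to uniqueness of $\hpi$ from Theorem~\ref{thm:ergodicity} for the converse just spells out what the paper leaves implicit when it refers back to Example~\ref{ex:bistoch of alpha-brw}.
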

\begin{proof}
    As seen in Example~\ref{ex:bistoch of alpha-brw}, a stationary distribution is uniform if and only if the corresponding transition matrix is bistochastic. Let $w\in W$.
    Since for all $w' \in \mathrm{IN}(w)$ we have $\hP(w',w) = p(w)$, we obtain
    \begin{align}
        \sum\limits_{w'\in W} \hP(w',w) =  \sum\limits_{w'\in \mathrm{IN}(w)} \hP(w',w) = \sum\limits_{w'\in \mathrm{IN}(w)} p(w) = |\mathrm{IN}(w)|p(w).
    \end{align}
    That implies that $\hP$ is bistochastic if and only if $ p(w) = |\mathrm{IN}(w)|^{-1}$ for all $w \in W$ which gives the wanted result.
\end{proof}
The condition~\eqref{eq:bistoch of P hat} is quite restrictive since $|\mathrm{IN}(w)|\in\mathbb N$. 
To emphasize that it is unrealistic to have a uniform $\hpi$, we compute the projected stationary distribution $\pi$ on $V$ in the case of a uniform $\hpi$. Applying the pullback to the nodes~\eqref{eq:pullback wedges to nodes} yields
\begin{align}
     \pi(v)  = \sum\limits_{w\in \mathrm{IN}(v)} \frac{1}{|W|} = \frac{1}{|W|} \sum\limits_{u\sim v} d_u
\end{align}
for all $v\in V$. This condition is also very restrictive, and it is only fulfilled by graphs with strong symmetries and by specific choices for the parameters $\alpha, \beta, \gamma$. Examples include the complete graph for $\alpha = \beta$ (since in that case there are no possible $\gamma$ transitions) and, as we see in Theorem~\ref{thm:statdistonregulargraph}, regular graphs with $\alpha = \beta = \gamma$.
Overall, these examples demonstrate that node2vec random walks almost never have uniform stationary distribution when $\beta\neq\gamma$, even when the underlying graph is regular.
\begin{figure}[tbp]
    \centering
    \includegraphics[width=0.25\linewidth]{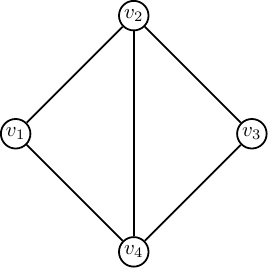}
    \caption{Example for a non-regular graph where~\eqref{eq:antireversibility} is fulfilled}
    \alt{Illustration of a four clique with one missing edge.}
    \label{fig:clique with missing edge}
\end{figure}

We conclude this section with an example of a non-regular graph that satisfies~\eqref{eq:antireversibility}. In Section~\ref{sec:results}, we saw that the weighted Eulerianity~\eqref{eq:inandoutwedgesequal} (which is equivalent to the graph being regular) implies that~\eqref{eq:antireversibility} holds but this shows that the opposite is not true.
\begin{example}[Wedge directed detailed balance do not imply regularity]\label{ex:clique with missing edge}
Although the wedge directed detailed balance condition \eqref{eq:antireversibility} is closely related to symmetries of the underlying graph, it is not equivalent to the weighted Eulerianity condition \eqref{eq:inandoutwedgesequal}. 
We illustrate this distinction with the graph shown in Figure~\ref{fig:clique with missing edge}, which consists of a $4$-clique with a single edge removed.

The graph is not regular and therefore, by Theorem~\ref{thm:statdistonregulargraph}, the weighted Eulerianity condition is violated. Nevertheless, we show that wedge directed detailed balance still holds in this case. By symmetry, the wedges of the graph fall into seven distinct equivalence classes, which can be represented by
\begin{nalign}
    &w_1 = (v_4,v_2,v_4), w_2 = (v_2,v_3,v_2), w_3 = (v_1,v_2,v_1), w_4 = (v_1,v_2,v_3) \\
    &w_5 = (v_1,v_2,v_4), w_6 = (v_2,v_1,v_4), w_7 = (v_2,v_4,v_3) 
\end{nalign}
By solving the invariant equation $\hpi = \hpi \hP$, we obtain the stationary distribution
\begin{nalign}
    \hpi(w_1) &= \hpi(w_3) = \frac{\alpha}{Z}, \\
    \hpi(w_2) &= \frac{\alpha + \beta +\gamma}{\alpha + \beta}\frac{\alpha}{Z}, \\
    \hpi(w_4) &= \frac{\gamma}{Z}, \\
    \hpi(w_5) &= \hpi(w_7) = \frac{\beta}{Z}, \\
    \hpi(w_6) &= \frac{\alpha + \beta +\gamma}{\alpha + \beta}\frac{\beta}{Z},
\end{nalign}
where $Z =10\alpha +12\beta +8\gamma$.

For comparison, on a regular $4$‑clique~\eqref{eq:antireversibility} holds because of Theorem~\ref{thm:statdistonregulargraph}. In the present graph, the missing edge reduces the number of outgoing wedges for $w_2$ and $w_6$ from three to two. To illustrate why~\eqref{eq:antireversibility} still holds, we calculate
\begin{nalign}
    \hpi(w_4)\hP(w_4,w_2) &= \frac{\gamma}{Z} \frac{\alpha}{\alpha + \beta} \\
    \hpi(w_2)\hP(-w_2,-w_4) &= \frac{\alpha + \beta +\gamma}{\alpha + \beta}\frac{\alpha}{Z} \frac{\gamma}{\alpha + \beta + \gamma} = \frac{\gamma}{Z} \frac{\alpha}{\alpha + \beta}.
\end{nalign}
As we can see here, the different denominators in $\hP(w_4,w_2)$ and $\hP(-w_2,-w_4)$ get compensated by the prefactor $\frac{\alpha+\beta+\gamma}{\alpha+\beta}$ in $\hpi(w_2)$, thereby ensuring that~\eqref{eq:antireversibility} is satisfied. The same principle applies to all other possible transitions. Thus, wedge directed detailed balance can be satisfied in non-regular graphs.

On a side note, also $\hpi(w)=\hpi(-w)$,~\eqref{eq:antireversibilityedges} and $\vpi(e) = \vpi(-e)$ are satisfied in this graph.
\end{example}

\section{Proofs of ergodicity, regularity and balance conditions}\label{ErgRegProofs}
\begin{proof}[Proof of Theorem~\ref{thm:ergodicity}]
    Let $G$ be connected and $\beta,\gamma > 0$.
    
    $(i)$ If $\alpha > 0$, there exists $m\in \NN$ such that
    \begin{align}
        \PP(\hY_m = w'|\hY_1 = w) > 0,
    \end{align}
    for all $w,w'\in W$, i.e., $\hP$ is irreducible. Now, let $G$ also contain a triangle, i.e., $\{ r,u,v \} \subset G$. Then, we compute the period of the directed wedge $w = (r,u,v)$. Observe the following transitions
    \begin{nalign}
        &\hY_1 = (r,u,v), \hY_2 = (u,v,r), \hY_3 = (v,r,u), \hY_4 = (r,u,v) \text{ and} \\ 
        &\hY_1 = (r,u,v), \hY_2 = (u,v,u), \hY_3 = (v,u,r), \hY_4 = (u,r,u), \hY_5 = (r,u,v). 
    \end{nalign}
    Because $\alpha, \beta, \gamma > 0$, these transitions all have positive probability. Thus, it is possible to transition from $w$ to $w$ in 3 and 4 steps. Since $\gcd(3,4) = 1$, $\hP$ is aperiodic.
    
    $(ii)$ If $\alpha = 0$, $d_v \ge 2 ~\forall v\in V$ and $\max_{v\in V} d_v > 2$, then $G$ is not a cycle, so the random walk cannot get stuck, neither in nodes of degree one nor in one direction of a cycle, meaning $\hP$ is irreducible. 

    For the rest of the proof, let $G$ be also finite and $\hP$ be irreducible and aperiodic. Since $G$ is finite, $\hY$ is positive recurrent and thus there exists a unique probability measure $\hpi$ on $W$ such that $\hpi = \hpi \hP$. 

    For equation~\eqref{eq:simplified invariant equation wedges}, we apply the definition of the transition probabilities $\hP$ in~\eqref{eq:wedge transition definition} to the invariant equation $\hpi = \hpi \hP$ to obtain
    \begin{align}
        \hpi(w) = \sum\limits_{w'\in W} \hpi(w') \hP(w',w) = \sum\limits_{w'\in \mathrm{IN}(w)} \hpi(w') p(w) = p(w) \sum\limits_{w'\in \mathrm{IN}(w)} \hpi(w').
    \end{align}
    For the last statement of the theorem, the aperiodicity of $\hP$ implies
    \begin{align}\label{eq:limitrep for pihat}
        \hpi(w') = \lim\limits_{n \to \infty} \hP^n(w'',w')
    \end{align}
    for all $w'\in W$ and for all starting wedges $w'' \in W$. Now, let $v \in V$ and $\{u_1,u_2\}\in \bE$. Using the definition of the $n$-step transition~\eqref{eq:nsteptransition} and representation~\eqref{eq:limitrep for pihat}, we obtain
    \begin{nalign}\label{eq:pullback existence}
        \pi(v) &\coloneqq \lim\limits_{n \to \infty} \PP(X_n = v| X_2 = u_2,X_1 = u_1) \\
        &= \lim\limits_{n \to \infty} \sum\limits_{w\in \mathrm{IN}(v)} \hP^{n-2}((u_1,u_2,\cdot),w) \\
        &= \sum\limits_{w\in \mathrm{IN}(v)} \hpi(w)
    \end{nalign}
    which implies the existence of the limit and the desired identity~\eqref{eq:pullback wedges to nodes}.
\end{proof}
\begin{proof}[Proof of Lemma~\ref{lem:relation inv measures on E and W}]
    Let $G$ be connected and let $\vP$ and $\hP$ be irreducible and aperiodic. 
    The proof of claim~\eqref{eq:pullback wedges to edges} is analogous to the above proof for~\eqref{eq:pullback wedges to nodes}. \\
    For claim~\eqref{eq:relation edges to wedges}, let $x,y,z\in V$ such that $w = (x,y,z)\in W$. By using the definition of conditional probability, we obtain
    \begin{nalign}
        \PP(X_{n+2}=z,X_{n+1}=y,X_n=x) &= \PP(X_{n+2}=z \mid X_{n+1}=y,X_n=x) \PP(X_{n+1}=y,X_n=x) \\
        &= \PP(X_{n+2}=z, X_{n+1}=y \mid X_{n+1}=y,X_n=x) \PP(X_{n+1}=y,X_n=x).
    \end{nalign}
    Now, let $n \to \infty$ on both sides to get
    \begin{align}
        \hpi((x,y,z)) = \vP((x,y),(y,z)) \vpi((x,y)).
    \end{align}
    Rewriting this in terms of $w$ yields
    \begin{align}
        \hpi(w) = \vP(e_1(w),e_2(w)) \vpi(e_1(w))
    \end{align}
    which is the wanted result.
\end{proof}
For the proof of Theorem~\ref{thm:statdistonregulargraph} we need the following lemma which states a relation between the in- and out-neighbors of two concatenated wedges.
\begin{lemma}\label{lem:wedgefact}
    Let $G$ be a connected graph. Then,
    \begin{align}
         \sum\limits_{w'\in \mathrm{ON}(w_2)} \lambda(w') = \sum\limits_{w'\in \mathrm{IN}(-w_1)} \lambda(w') \quad \forall w_2\in W,w_1 \in \mathrm{IN}(w_2).\label{eq:wedgefact2}
    \end{align}
\end{lemma}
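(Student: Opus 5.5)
The identity is a direct bijection between the two index sets that preserves $\lambda$, so the plan is to make both sides explicit in coordinates and then match terms using the direction-independence \eqref{eq:wedgefact1}.

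\emph{Coordinates.} Write $w_2=(a,b,c)\in W$. The condition $w_1\in\mathrm{IN}(w_2)$ means $w_1(2)=w_2(1)=a$ and $w_1(3)=w_2(2)=b$. Hence $w_1=(x,a,b)$ for some neighbour $x$ of $a$, and $-w_1=(b,a,x)$.

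\emph{Left-hand side.} By definition, $\mathrm{ON}(w_2)$ consists of the wedges $w'$ with $w'(1)=a$, $w'(2)=b$ and $\{b,w'(3)\}\in\bE$. So
\begin{align*}
\mathrm{ON}(w_2)=\{(a,b,y): y\sim b\}.
\end{align*}

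\emph{Right-hand side.} $\mathrm{IN}(-w_1)$ consists of the wedges $w'$ with $w'(2)=(-w_1)(1)=b$, $w'(3)=(-w_1)(2)=a$ and $\{w'(1),(-w_1)(1)\}=\{w'(1),b\}\in\bE$. So
\begin{align*}
\mathrm{IN}(-w_1)=\{(y,b,a): y\sim b\}.
\end{align*}
In particular the right-hand side does not depend on $x$, which is consistent with the claim holding for every $w_1\in\mathrm{IN}(w_2)$.

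\emph{Matching.} Both sets are indexed by the same neighbours $y$ of $b$, via the bijection $(a,b,y)\mapsto(y,b,a)=-(a,b,y)$. By \eqref{eq:wedgefact1}, $\lambda((a,b,y))=\lambda((y,b,a))$. Summing over $y\sim b$ gives \eqref{eq:wedgefact2}.

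There is no real obstacle. The only care needed is in unpacking the index conventions of $\mathrm{IN}$ and $\mathrm{ON}$ correctly, in particular which node of $-w_1$ the adjacency condition refers to. The flat case $y=a$ is included on both sides automatically, since $a\sim b$.
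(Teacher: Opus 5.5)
Your proof is correct and matches the paper's argument. Both write $\mathrm{ON}(w_2)=\{(a,b,y):y\sim b\}$ and $\mathrm{IN}(-w_1)=\{(y,b,a):y\sim b\}$ explicitly and match terms via the reversal $w'\mapsto -w'$ together with \eqref{eq:wedgefact1}.
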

\begin{proof}
    Let $G$ be connected and $w_2\in W, w_1 \in \mathrm{IN}(w_2).$ Name $w_2 =(r,s,t)$ and $w_1 = (p,r,s)$ for $p,r,s,t \in V$. Then $-w_1 = (s,r,p)$ and 
    \begin{align}
        \mathrm{ON}(w_2) = \{ (r,s,\cdot): (s,\cdot) \in \vE \} \text{ and } \mathrm{IN}(-w_1) = \{ (\cdot,s,r): (\cdot,s) \in \vE \}.
    \end{align}
    Together with $\lambda(w) = \lambda(-w)$, this implies
    \begin{align}
        \sum\limits_{w'\in \mathrm{ON}(w_2)} \lambda(w') = \sum\limits_{w'\in \mathrm{IN}(-w_1)} \lambda(-w') = \sum\limits_{w'\in \mathrm{IN}(-w_1)} \lambda(w').
    \end{align}
\end{proof}
\begin{proof}[Proof of Theorem~\ref{thm:statdistonregulargraph}]
    Let $G$ be a finite connected graph. 
    
    For the equivalence of $G$ regular and the weighted Eulerianity, we begin by showing that the amount ingoing and outgoing wedges for fixed $w \in W$ are the same for $\alpha$ and $\beta$ wedges, respectively. 
    Observe that for $w\in W$ with $w = (x,y,z)$ there is exactly one ingoing and one outgoing $\alpha$ wedge such that
    \begin{align}\label{eq:alphainandoutequality}
        |\{w'\in \mathrm{ON}(w)\cap W_{-}\}| = 1 = |\{w'\in \mathrm{IN}(w)\cap W_{-}\}|.
    \end{align}
    The geometry of the triangle yields that for each ingoing $\beta$ wedge there is an outgoing $\beta$ wedge on the other side of the triangle, i.e.
    \begin{nalign}
          |\{w'\in \mathrm{ON}(w)\cap W_{\Delta}\}| &= |\{(x,y,p): (x,y),(y,p),(p,x)\in \vE, p\in V\}| \\
        &= |\{(p,x,y): (x,y),(y,p),(p,x)\in \vE, p\in V\}| \\
        &= |\{w'\in \mathrm{IN}(w)\cap W_{\Delta}\}|\label{eq:betainandoutequality}
    \end{nalign}
    for all $w \in W$.
    Next, we find an equivalent formulation of equation \eqref{eq:inandoutwedgesequal}. Using the disjoint composition $W= W_{\Delta} \cup W_{\Lambda} \cup W_{-}$, equation \eqref{eq:inandoutwedgesequal} is equivalent to
    \begin{nalign}
          &\sum\limits_{w'\in \mathrm{ON}(w)\cap W_{\Delta}}\beta + \sum\limits_{w'\in \mathrm{ON}(w)\cap W_{\Lambda}}\gamma + \sum\limits_{w'\in \mathrm{ON}(w)\cap W_{-}}\alpha\\
        = &\sum\limits_{w'\in \mathrm{IN}(w)\cap W_{\Delta}}\beta + \sum\limits_{w'\in \mathrm{IN}(w)\cap W_{\Lambda}}\gamma + \sum\limits_{w'\in \mathrm{ON}(w)\cap W_{-}}\alpha.
    \end{nalign}
    Since by~\eqref{eq:alphainandoutequality} and~\eqref{eq:betainandoutequality} the two sums over the $\alpha$ and the two sums over $\beta$ are the same, we obtain that~\eqref{eq:inandoutwedgesequal} is equivalent to
    \begin{align}
          \sum\limits_{w'\in \mathrm{ON}(w)\cap W_{\Lambda}} \gamma = \sum\limits_{w'\in \mathrm{IN}(w)\cap W_{\Lambda}} \gamma
    \end{align}
    which is equivalent to
    \begin{align}\label{eq:gammainandoutequality}
          |\{w'\in \mathrm{ON}(w)\cap W_{\Lambda}\}| = |\{w'\in \mathrm{IN}(w)\cap W_{\Lambda}\}|
    \end{align}
    for all $w\in W$. 
    Now, we can prove the statement. We begin by showing that~\eqref{eq:inandoutwedgesequal} implies that $G$ is regular.
    If we consider a wedge $w_1 = (u,v,x)\in W$, we can characterize the degrees of $u$ and $v$ by
    \begin{nalign} \label{eq:degreeformulas}
        &d_u = |\{ w'\in \mathrm{IN}(w_1)\cap W_{\Lambda} \}| + |\{w'\in \mathrm{IN}(w_1)\cap W_{\Delta}\}| + 1 \\
        &d_v = |\{ w'\in \mathrm{ON}(w_1)\cap W_{\Lambda} \}| + |\{w'\in \mathrm{ON}(w_1)\cap W_{\Delta}\}| + 1.
    \end{nalign}
    In both cases, the $+1$ comes from the connection $(u,v)$ which is represented by the corresponding flat wedge. Since by~\eqref{eq:betainandoutequality} and~\eqref{eq:gammainandoutequality} the first two above terms are equal, we obtain that $d_u = d_v$. Doing the same for $w_2 = (u,v,y) \in \vE$ implies that $d_v=d_y$. Since $G$ is connected, we can continue this procedure to obtain $d_i = d_j$ for all $i,j\in V$. 
    
    The other direction follows similarly: Since~\eqref{eq:betainandoutequality} holds in general, if we have $d_u = d_v$, we need to fulfill
    \begin{align}\label{eq:rest}
        |\{ w'\in \mathrm{IN}(w_1)\cap W_{\Lambda} \}| = |\{ w'\in \mathrm{ON}(w_1)\cap W_{\Lambda} \}|
    \end{align}
    as well because otherwise we get a contradiction in~\eqref{eq:degreeformulas}. As shown above,~\eqref{eq:rest} is equivalent to~\eqref{eq:inandoutwedgesequal} and the claimed equivalence follows. 
    
    For the rest of the proof, let $\hP$ be irreducible and assume that $G$ satisfies~\eqref{eq:inandoutwedgesequal}. For proving the expression of the stationary distribution, let $w \in W$ and define $Z \coloneqq \sum_{w'\in W} \lambda(w')$. We want to find a solution $\hpi$ for the invariant identity $\hpi = \hpi \hP$. We write out the transition probability using equation~\eqref{eq:defwedgetransition} and reorder the terms to get
    \begin{align}
        \hpi(w) = \sum\limits_{w'\in \mathrm{IN}(w)} \hpi(w') \hP(w',w) 
        = \sum\limits_{w'\in \mathrm{IN}(w)} \hpi(w') \frac{\lambda(w)}{\sum\limits_{\Tilde{w}\in \mathrm{ON}(w)}\lambda(\Tilde{w})} 
        = \lambda(w) \frac{\sum\limits_{w'\in \mathrm{IN}(w)} \hpi(w')}{\sum\limits_{\Tilde{w}\in \mathrm{ON}(w)} \lambda(\Tilde{w})}.
    \end{align}
    Now we can use the weighted Eulerianity~\eqref{eq:inandoutwedgesequal} to obtain
    \begin{align}
        \hpi(w) = \lambda(w) \frac{\sum\limits_{w'\in \mathrm{IN}(w)} \hpi(w')}{\sum\limits_{\Tilde{w}\in \mathrm{IN}(w)} \lambda(\Tilde{w})}
    \end{align}
    This equation is solved by $\hpi(w) = \frac{\lambda(w)}{Z}$ which is the unique stationary distribution. 
    
    For showing that $\hpi$ satisfies~\eqref{eq:antireversibility}, let $w_2 \in W$ and $ w_1 \in \mathrm{IN}(w_2)$. The expression of the stationary distribution, the definition of the wedge transitions~\eqref{eq:defwedgetransition} and Lemma~\ref{lem:wedgefact} yield
    \begin{align}\label{eq:antireversibilityproof}
        \hpi(w_1) \hP(w_1,w_2) &= \frac{\lambda(w_1)}{Z} \frac{\lambda(w_2)}{\sum\limits_{w'\in \mathrm{ON}(w_2)}\lambda(w')} = \frac{\lambda(w_1)}{Z} \frac{\lambda(w_2)}{\sum\limits_{w'\in \mathrm{IN}(-w_1)}\lambda(w')}.
    \end{align}
    By switching the numerators and using~\eqref{eq:wedgefact1} and the weighted Eulerianity~\eqref{eq:inandoutwedgesequal}, we obtain
    \begin{nalign}
        \frac{\lambda(w_1)}{Z} \frac{\lambda(w_2)}{\sum\limits_{w'\in \mathrm{IN}(-w_1)}\lambda(w')}
        = \frac{\lambda(-w_2)}{Z} \frac{\lambda(-w_1)}{\sum\limits_{w'\in \mathrm{ON}(-w_1)}\lambda(w')} 
        = \hpi(-w_2)\hP(-w_2,-w_1),
    \end{nalign}
    which shows~\eqref{eq:antireversibility}.
\end{proof}
\begin{proof}[Proof of Corollary~\ref{cor:anti-rev on edges}]
    Let $G$ be a finite regular graph and let $e,e'\in \vE$ be distinct edges. We begin by showing~\eqref{eq:antireversibilityedges}. By using Theorem~\ref{thm:statdistonregulargraph} and the pullback~\eqref{eq:pullback wedges to edges}, we obtain the stationary distribution
    \begin{align}
        \vpi(e) = \sum\limits_{w \in \mathrm{IN}(e)} \frac{\lambda(w)}{Z}
    \end{align}
    for all $e\in \vE$ where $Z \coloneqq \sum_{w'\in W}\lambda(w')$. We use this and express $\vP(e,e')$ in terms of the wedge transition probability~\eqref{eq:defwedgetransition} to get
    \begin{align}
        \vpi(e)\vP(e,e') &= \sum\limits_{w \in \mathrm{IN}(e)} \frac{\lambda(w)}{Z} \frac{\lambda((e,e'))}{\sum\limits_{\tilde{w} \in \mathrm{ON}((e,e'))} \lambda(\tilde{w})}
    \end{align}
    With the weighted Eulerianity~\eqref{eq:inandoutwedgesequal}, we compute
    \begin{nalign}
         \sum\limits_{w \in \mathrm{IN}(e)} \frac{\lambda(w)}{Z} \frac{\lambda((e,e'))}{\sum\limits_{\tilde{w} \in \mathrm{ON}((e,e'))} \lambda(\tilde{w})}
        = \sum\limits_{w \in \mathrm{IN}(e)} \frac{\lambda(w)}{Z} \frac{\lambda((e,e'))}{\sum\limits_{\tilde{w} \in \mathrm{IN}((e,e'))} \lambda(\tilde{w})} 
        = \frac{\lambda((e,e'))}{Z}.
    \end{nalign}
    To start from the right-hand side of~\eqref{eq:antireversibilityedges}, we use similar arguments as above to get
    \begin{align}
        \sum\limits_{\tilde{w} \in \mathrm{ON}((-e',-e))} \lambda(\tilde{w}) = \sum\limits_{\tilde{w} \in \mathrm{IN}(e')} \lambda(-\tilde{w}) = \sum\limits_{\tilde{w} \in \mathrm{IN}(e')} \lambda(\tilde{w})
    \end{align}
    which yields
    \begin{align}
        \vpi(e')\vP(-e',-e) = \sum\limits_{w \in \mathrm{IN}(e')} \frac{\lambda(w)}{Z} \frac{\lambda((-e',-e))}{\sum\limits_{\tilde{w} \in \mathrm{ON}((-e',-e))} \lambda(\tilde{w})}
        = \frac{\lambda((-e',-e))}{Z}.
    \end{align}
    By equation~\eqref{eq:wedgefact1} we have
    \begin{align}
        \lambda((e,e')) = \lambda((-e',-e))
    \end{align}
    which proves the equality in~\eqref{eq:antireversibilityedges}. 
    To 
    prove~\eqref{eq:regular opposite edges equal stat}, we use the pullback~\eqref{eq:pullback wedges to edges}, Theorem~\ref{thm:statdistonregulargraph} and a symmetry argument to obtain
    \begin{nalign}
        \vpi(e) = \sum\limits_{w\in \mathrm{IN}(e)} \hpi(w) = \sum\limits_{w\in \mathrm{IN}(e)} \frac{\lambda(w)}{Z} 
        = \sum\limits_{w\in \mathrm{ON}((-e,\cdot))} \frac{\lambda(w)}{Z}.
    \end{nalign}
    The weighted Eulerianity~\eqref{eq:inandoutwedgesequal} and doing above steps in reverse order yield
    \begin{nalign}
        \sum\limits_{w\in \mathrm{ON}((-e,\cdot))} \frac{\lambda(w)}{Z}
        = \sum\limits_{w\in \mathrm{IN}((-e,\cdot))} \frac{\lambda(w)}{Z}
        = \sum\limits_{w\in \mathrm{IN}(-e)} \frac{\lambda(w)}{Z} 
        = \sum\limits_{w\in \mathrm{IN}(-e)} \hpi(w) = \vpi(-e).
    \end{nalign}
\end{proof}
\begin{proof}[Proof of Lemma~\ref{lem:directed detailed balance relations}]
    Let $G$ be connected. Assume that $\vpi$ fulfills~\eqref{eq:antireversibilityedges}. \\
    $(1a)$ Let $e'\in \vE$. By using~\eqref{eq:antireversibilityedges} and by relabeling the sum, we obtain
    \begin{nalign}
        (\vpi\vP)(e') &= \sum\limits_{e\in \vE} \vpi(e) \vP(e,e') = \sum\limits_{e\in \vE} \vpi(e') \vP(-e',-e) \\
        &= \vpi(e') \sum\limits_{e\in \vE} \vP(-e',-e) \\
        &= \vpi(e') \sum\limits_{-e\in \vE} \vP(-e',-e) = \vpi(e').
    \end{nalign}
    $(1b)$ Let $\alpha >0$. Set $e' \coloneqq -e$ in~\eqref{eq:antireversibilityedges} to get
    \begin{align}\label{eq:edge and opposite edge balance}
        \vpi(e)\vP(e,-e) = \vpi(-e)\vP(e,-e).
    \end{align}
    Since $\alpha >0$, also $\vP(e,-e)>0$ and~\eqref{eq:edge and opposite edge balance} implies
    \begin{align}
        \vpi(e) = \vpi(-e).
    \end{align}
    $(1c)$ Now, let $\alpha = 0$, let $e_0,e_1,e_2,\ldots,e_{n-1},e_n,-e_0$ be a path of length $n+2$ such that $e_0,e_i\in \vE$, $\vP(e_{i-1},e_i)>0$ for all $i\in [n]$ and $\vP(e_n,-e_0)>0$. By using~\eqref{eq:antireversibilityedges} repeatedly, we obtain
    \begin{align}
        \vpi(e_0) \vP(e_0,e_1)\vP(e_1,e_2)\cdots \vP(e_n,-e_0) = \vpi(-e_0) \vP(e_0,-e_n)\vP(-e_n,-e_{n-1})\cdots \vP(-e_1,-e_0)
    \end{align}
    which implies that $\vpi(e_0) = \vpi(-e_0)$ holds if and only if
    \begin{align}\label{eq:cycle probabilities}
        \vP(e_0,e_1)\vP(e_1,e_2)\cdots \vP(e_n,-e_0) = \vP(e_0,-e_n)\vP(-e_n,-e_{n-1})\cdots \vP(-e_1,-e_0)
    \end{align}
    holds. Filling in the probabilities in~\eqref{eq:cycle probabilities} and setting $e_{n+1} = -e_0$ for notational simplicity yields
    \begin{align}
        \prod\limits_{i=0}^n\frac{\lambda((e_{i},e_{i+1}))}{\sum_{w\in ON(e_{i},\cdot)} \lambda(w)} = \prod\limits_{i=0}^n\frac{\lambda((-e_{n-i+1},-e_{n-i}))}{\sum_{w\in ON(-e_{n-i+1},\cdot)} \lambda(w)}.
    \end{align}
    Since $\lambda(w)=\lambda(-w)$ for all $w\in W$, the product of the numerators on both sides are equal. Furthermore, it holds $-e_{n+1}=e_0$, so the denominator for $i=0$ is equal on both sides. Using this implies that~\eqref{eq:cycle probabilities} is equivalent to
    \begin{align}
        \prod\limits_{i=1}^n\sum_{w\in ON(e_{i},\cdot)} \lambda(w) = \prod\limits_{i=1}^n \sum_{w\in ON(-e_{n-i+1},\cdot)} \lambda(w).
    \end{align}
    By using $ON((-e,\cdot))= IN((e,\cdot))$ for all $e\in \vE$ and by reordering the product on the right-hand side, we obtain that~\eqref{eq:cycle probabilities} is equivalent to
    \begin{align}
        \prod\limits_{i=1}^n\sum_{w\in ON(e_{i},\cdot)} \lambda(w) = \prod\limits_{i=1}^n \sum_{w\in IN(e_i,\cdot)} \lambda(w).
    \end{align}
    which is the desired statement. \\
    $(1d)$ Let $\vpi(e) = \vpi(-e)$ hold for all $e\in \vE$. Applying~\eqref{eq:antireversibilityedges} twice yields
    \begin{align}\label{eq:twostep EDB}
        \vpi(e)\vP(e,e')\vP(e',e'') = \vpi(e')\vP(-e',-e)\vP(e',e'') = \vpi(e'')\vP(-e'',-e')\vP(-e',-e).
    \end{align}
    Remembering the relation~\eqref{eq:relation edges to wedges}, we set
    \begin{align}
        \hpi(w) \coloneqq \vpi(e_1(w))\vP(e_1(w),e_2(w))
    \end{align}
    for all $w\in W$ and note that for $w\in \mathrm{IN}(w')$ we have
    \begin{align}
        \hP(w,w') = \vP(e_1(w'),e_2(w')).
    \end{align}
    Using~\eqref{eq:antireversibilityedges} and $\vpi(e) = \vpi(-e)$, for $w\in W$, we compute
    \begin{nalign}\label{eq:EDB implies WDB, stat dist balance}
        \hpi(w) &= \vpi(e_1(w))\vP(e_1(w),e_2(w)) = \vpi(e_2(w))\vP(-e_2(w),-e_1(w)) \\
        &= \vpi(-e_2(w))\vP(-e_2(w),-e_1(w)) = \hpi(-w).
    \end{nalign}
    Let $w,w'\in W$ such that $w\in \mathrm{IN}(w')$. Putting all of that together, we obtain
    \begin{align}\label{eq:edge representation EDB WDB proof}
        \hpi(w) \hP(w,w') = \vpi(e_1(w))\vP(e_1(w),e_2(w)) \vP(e_1(w'),e_2(w')).
    \end{align}
    Since $w\in \mathrm{IN}(w')$, it holds $e_2(w) = e_1(w')$ and we can use~\eqref{eq:twostep EDB} and $\vpi(e) = \vpi(-e)$ to get
    \begin{nalign}
        \hpi(w) \hP(w,w') &= \vpi(e_2(w'))\vP(-e_2(w'),-e_1(w'))\vP(-e_2(w),-e_1(w)) \\
        &= \vpi(-e_2(w'))\vP(-e_2(w'),-e_1(w'))\vP(-e_2(w),-e_1(w))\\
        &=\hpi(-w')\hP(-w',-w)
    \end{nalign}
    Together with~\eqref{eq:EDB implies WDB, stat dist balance}, we obtain~\eqref{eq:antireversibility}.

    Now, assume that $\hpi$ fulfills~\eqref{eq:antireversibility}. \\
    $(2a)$ The proof is analog to the proof of $(1a)$. \\
    $(2b)$ Name $w = (u,v,x)$ and $w_\alpha = (v,x,v)$. Since $\alpha >0$, the path $(w,w_\alpha,-w)$ has positive probability. Using~\eqref{eq:antireversibility} twice and reordering the terms, we compute
    \begin{align}
        \hpi(w)\hP(w,w_\alpha)\hP(w_\alpha,-w) &= \hpi(w_\alpha)\hP(-w_\alpha,-w)\hP(w_\alpha,-w) 
        = \hpi(-w)\hP(w,-w_\alpha)\hP(-w_\alpha,-w).
    \end{align}
    Note that $w_\alpha = (v,x,v) = -w_\alpha$, so we obtain
    \begin{align}
        \hpi(w)\hP(w,w_\alpha)\hP(w_\alpha,-w) = \hpi(-w)\hP(w,w_\alpha)\hP(w_\alpha,-w).
    \end{align}
    Since by construction all probabilities are positive, $\hpi(w) = \hpi(-w)$ follows.
\end{proof}
\section{Proof of recurrence on infinite graphs}\label{RecProofs}

The goal of this section is to show that node2vec random walks are recurrent on regular graphs if and only if the SRW is recurrent on that graph. We employ ideas from electrical network theory to compare the recurrence properties of different Markov chains, like the following lemma.
\begin{lemma}(follows from Thm. 2.17 in~\cite{lyons_probability_2017})\label{lem:recurrence equivalence with different conductances}
    Let $G = (V,\bE)$ be an infinite connected undirected graph and let $X_1,X_2$ be reversible Markov chains on $G$ with corresponding transition matrices $P_1,P_2$ that fulfill $0 < P_i(v_1,v_2) \le 1$ for all $(v_1,v_2) \in \vE$ and for $k=1,2$. Then,
    \begin{align}
        X_1 \text{ is recurrent on } G \Longleftrightarrow X_2 \text{ is recurrent on } G.
    \end{align}
\end{lemma}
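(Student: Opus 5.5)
The plan is to reduce everything to electrical networks on $G$ and then invoke the comparison principle from \cite{lyons_probability_2017} cited in the statement. Since $X_1$ and $X_2$ are reversible and only jump along edges of $G$ (positivity exactly on $\vE$), each has a reversing measure $\mu_k$ on $V$. This gives symmetric conductances
\[
c_k(\{v_1,v_2\}) \coloneqq \mu_k(v_1)P_k(v_1,v_2) = \mu_k(v_2)P_k(v_2,v_1) > 0, \qquad \{v_1,v_2\}\in \bE,
\]
and $X_k$ is exactly the network random walk on $(G,c_k)$.

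First I recall the standard criterion (Lyons--Peres, Ch.~2): the network walk on $(G,c_k)$ is recurrent iff the effective resistance $\mathcal R_k(o\leftrightarrow\infty)$ from a fixed root $o$ to infinity is infinite. This reformulates both recurrence statements as statements about $\mathcal R_1$ and $\mathcal R_2$ on the same underlying graph. Connectedness of $G$ together with $P_k>0$ on every directed edge ensures that both chains are irreducible on $V$, so the choice of root is irrelevant.

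The second step is Rayleigh's monotonicity principle in the form of Thm.~2.17 of \cite{lyons_probability_2017}. If $c_1\le C\,c_2$ edgewise, then $\mathcal R_1\ge C^{-1}\mathcal R_2$, and symmetrically in the other direction. Hence $\mathcal R_1=\infty$ iff $\mathcal R_2=\infty$ as soon as there is a constant $C\ge 1$ with
\[
C^{-1}c_2(e)\le c_1(e)\le C\,c_2(e) \quad \text{for all } e\in \bE .
\]
The core of the proof is therefore to extract this two-sided comparability from the hypothesis $0<P_k(v_1,v_2)\le 1$. Writing $\mu_k(v)$ as a product of ratios $P_k(x_{i},x_{i+1})/P_k(x_{i+1},x_i)$ along a path from $o$ to $v$ (well defined by reversibility/Kolmogorov's criterion), the ratio $c_1/c_2$ on an edge becomes a product of such transition-probability ratios.

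The main obstacle is this comparability step. Positivity and the trivial upper bound $P_k\le 1$ control each factor only if the transition probabilities are bounded away from $0$ uniformly over $\vE$ and the path products do not accumulate. A biased birth--death chain on $\mathbb Z$, which is reversible with all $P_k\in(0,1]$ yet transient, shows exactly where this enters. So at this step I would make explicit the uniform reading of the hypothesis intended by the citation, namely that $c_1$ and $c_2$ are comparable via Thm.~2.17. I would then verify that, in the later application (the chain $Q$ built from \eqref{eq:antireversibilityedges} versus the SRW on a bounded-degree graph), the conductances are indeed uniformly comparable, because there all transition probabilities are ratios of finitely many values of $\alpha,\beta,\gamma$ and $\vpi(e)=\vpi(-e)$ fixes the reversing measure. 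With this in hand, the final conclusion follows immediately from the two steps above.
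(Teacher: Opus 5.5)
You are right on the essential point: the lemma as written is false, so it cannot be proved, and the paper's only justification (the pointer to Thm.~2.17 of \cite{lyons_probability_2017}) does not close the gap.

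Your counterexample works. On $G=\mathbb{Z}$, the walk with $P_1(x,x+1)=\tfrac23$, $P_1(x,x-1)=\tfrac13$ is reversible with respect to $\mu_1(x)=2^x$. Its entries on $\vE$ lie in $(0,1]$ and are even bounded away from $0$, yet it is transient, while the SRW is recurrent.

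Any network comparison of this kind (Rayleigh monotonicity, or a rough embedding given by the identity map) needs a two-sided bound $C^{-1}c_2\le c_1\le Cc_2$ on the conductances $c_k(\{v_1,v_2\})=\mu_k(v_1)P_k(v_1,v_2)$. That is, it needs control of the reversing measures, and $0<P_k\le 1$ says nothing about them; here $c_1(\{x,x+1\})=\tfrac23 2^x$ versus $c_2\equiv\tfrac12$. With the hypothesis amended in this way, your two steps form a complete and correct proof: recurrence iff $\mathcal{R}(o\leftrightarrow\infty)=\infty$, then monotonicity and scaling of effective resistance. That amended statement is all the citation can deliver.

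What does not hold up is your closing claim that the amended hypothesis is met in the paper's application. There are two problems.

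First, $\bK$ is an infinite geometric mixture $\sum_i(1-p)^ip\,K_L^i$ (symmetrized over orientations). It is not a ratio of finitely many values of $\alpha,\beta,\gamma$; that holds for $\vP$ only. It is also positive on every pair of edges. So edgewise comparability with a nearest-neighbour walk is impossible. Against another long-range kernel such as Hermon's $\tilde Q$, the far-apart entries decay at exponential rates that depend on $p$ and on the kernel, so a uniform $C$ is not available in general.

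Second, \eqref{eq:antireversibilityedges} together with $\vpi(e)=\vpi(-e)$ does not bound $\vpi$. Write $S(e)$ for the total $\lambda$-weight of the wedges with first edge $e$. These conditions amount to $\vpi((u,v))=\psi(v)S((u,v))$ with $\psi(v)/\psi(u)=S((v,u))/S((u,v))$, and on non-regular graphs $\psi$ can grow geometrically. For example, take triangles $\{a_k,b_k,y_k\}$, $k\in\mathbb{Z}$, joined by bridges $\{b_{k-1},a_k\}$, with one pendant leaf at each $a_k$. For $\alpha=\gamma=1$, $\beta=2$ one gets $\psi(b_k)/\psi(b_{k-1})=\tfrac34\cdot\tfrac54=\tfrac{15}{16}$.

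This edge chain is irreducible and shift-periodic. Lifting the stationary law of its finite quotient therefore gives a second, shift-invariant, invariant measure that is not proportional to $\vpi$. A recurrent chain has a unique invariant measure up to scaling, so node2vec is transient there. The SRW on the same graph is recurrent, since each bridge is a cut edge.

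So the missing hypothesis is not a technicality. Theorem~\ref{thm:recurrence} does not follow without bounds on $\vpi$ (as on regular graphs) and a comparison that tolerates long jumps with exponentially small weights. As stated, it fails on this example.
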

This lemma is a very useful tool to show recurrence by comparing it to a different random walk. We will use~\eqref{eq:antireversibilityedges} to construct an auxiliary Markov chain that is reversible for which we can apply this Lemma. This construction closely follows the construction in the proof of Theorem 1 in~\cite{hermon_reversibility_2019}.

We divide the proof in the cases $\alpha>0$ and $\alpha = 0$ because they require different assumptions on the graph and a different construction of the auxiliary chain. As the case $\alpha>0$ allows the node2vec random walks to backtrack, it is easier there to construct paths that return to a previous state than in the case $\alpha = 0$.
\begin{proof}[Proof of Theorem~\ref{thm:recurrence}]
    For the case $\alpha > 0$, let $\vP$ be the kernel of the edge Markovian representation of the node2vec random walk $X$ with parameters $\alpha, \beta, \gamma > 0$. Let $Y \coloneqq (Y_n)_{n=0}^\infty$ be the Markov chain corresponding to the lazy version of $\vP$, of which we denote the kernel by $\vP_L = \frac{1}{2}(I+\vP)$. We can interpret $Y$ in the following way: Given $Y_n$ the chain picks a candidate $T_{n+1}$ for $Y_{n+1}$ according to $\vP$ and then flips a fair coin to decide if the candidate is accepted. If not, then $Y_{n+1}=Y_n$.

    Let $p = \min_{e\in \vE}\vP(e,-e)$ be the minimum probability of $Y$ to do a backtracking step. As $\alpha>0$ and the degrees of $G$ are bounded, this minimum exists. 
    
    To generate $Y$ and $T_1,T_2,\ldots$, we define i.i.d. Bernoulli($p$) random variables $\xi_0, \xi_1,\ldots$ such that for $n\ge 0$ we have $T_{n+1} = -Y_n$ whenever $\xi_n = 1$ (and also possibly when $\xi_n = 0$). Thus, at every time step, $Y$ backtracks at least with probability $p$ (but $Y$ may backtrack also when this Bernoulli flip fails). The proof of the construction of the $\xi$ can be found in proof of Theorem 1 (ii) in~\cite{hermon_reversibility_2019}.

    We now construct a Markov chain $Q$ from $Y$ of which the reversibility properties are easier to analyze, as it is undirected and reversible.  To be more precise, fix a directed edge $e = (u_0,v_0)\in \vE$. Consider the case where the initial distribution of $Y$ is uniform on $\{ e,-e \}$. Let $\be_0 = \{ u_0,v_0 \}$ be the undirected version of $e_0$ and $\tau_0=-1$. Inductively, we define $\tau_{n+1}\coloneqq \inf \{ t > \tau_n: \xi_{t-1} = 1 \}$ and $\be_n \coloneqq \{ u_n,v_n \}$ where $u_n$ and $v_n$ are the endpoints of $T_{\tau_n+1}$. Then, the process $Q \coloneqq (\be_n)_{n=0}^\infty$ on the undirected edges $\bE$ is given by $Q_n = \be_n$ for all $n\ge 0$. Informally, $Q$ tracks the steps of $Y$ at which $Y$ backtracks. In the rest of the proof, we show that:
    \begin{itemize}
        \item[(i)] $Q$ is a reversible Markov chain with respect to $\bpi(\{ e,-e \}) \coloneqq \vpi(e) + \vpi(-e)$. In other words, its transition kernel $\bK$ satisfies
        \begin{align}
            \bpi(\{ e,-e \}) \bK(\{ e,-e \},\{ e',-e' \}) = \bpi(\{ e',-e' \}) \bK(\{ e',-e' \},\{ e,-e \})
        \end{align}
        for all $e,e' \in \vE$.
        \item[(ii)] $X$ is recurrent if and only if $Q$ is recurrent.
        \item[(iii)] SRW on $G$ is recurrent if and only if $Q$ is recurrent.
    \end{itemize}
    \textit{Step (i).} Consider the transition kernel $K$ on $\vE$ given by
    \begin{align}\label{eq:Kdefinition}
        K(e,e') = \begin{cases}
            (1-p)^{-1} \vP(e,e'), &\text{if } e \neq -e' \\
            (1-p)^{-1} (\vP(e,-e)-p), &\text{if } e = -e'.
        \end{cases}
    \end{align}
    Informally, $K$ is the transition kernel of $Y$ conditionally on the Bernoulli $p$ flip failing. We define the lazy version of $K$ as $K_L(e,e') = \frac{1}{2}(I(e,e')+K(e,e'))$ where $I$ is the identity kernel, so that the walk remains at its current edge with probability $1/2$.
    
    We now show that $K_L(e,e')$ satisfies a directed detailed balance. Let $e \neq -e'$. Denote by $P_{e_0}$ the law of the Markov chain $Y$ starting in $e_0$. By using~\eqref{eq:antireversibilityedges} repeatedly, we can show that
    \begin{nalign}\label{eq:antirevsteps}
        \vpi(e_0) P_{e_0}(Y_1 = e_1,\ldots, Y_n = e_n) = \vpi(e_n) P_{-e_n}(Y_1 = -e_{n-1},\ldots, Y_n = -e_0).
    \end{nalign}
    Let $\gamma_n(e,e')$ denote the set of paths from $e$ to $e'$ in $n$ steps. Then~\eqref{eq:antirevsteps} yields
    \begin{nalign}\label{eq:vPn reversibility}
        \vpi(e)\vP^n(e,e') &= \sum\limits_{\gamma_n(e,e')} \vpi(e) P_{e}(Y_1 = e_1,\ldots, Y_n = e') \\
        &= \sum\limits_{\gamma_n(e,e')} \vpi(e') P_{-e'}(Y_1 = -e_{n-1},\ldots, Y_n = -e) \\
        &= \vpi(e')\vP^n(-e',-e),
    \end{nalign}
    for all $n\ \in \NN$. Combining this with the definition of $K$ in~\eqref{eq:Kdefinition} gives
    \begin{nalign}
        \vpi(e) K^n(e,e') &= (1-p)^{-n}\vpi(e)\vP^n(e,e') \\
        &= (1-p)^{-n}\vpi(e')\vP^n(-e',-e) = \vpi(e') K^n(-e',-e),
    \end{nalign}
    for all $n \in \NN$. We can extend this directed detailed balance to the lazy version of $K$ by computing
    \begin{nalign}\label{eq:lazyK reversibility case 1}
        \vpi(e) K^n_L(e,e') &= \sum\limits_{i=0}^n 2^{-n} \binom{n}{i}\vpi(e)K^i(e,e') \\
        &= \sum\limits_{i=0}^n 2^{-n} \binom{n}{i}\vpi(e')K^i(-e',-e) \\
        &= \vpi(e') K^n_L(-e',-e).
    \end{nalign}
    This shows that $K_L$ satisfies the directed detailed balance when $e\neq -e'$. 
    
    We now study the case $e= -e'$. Since $\alpha > 0$, we can apply Lemma~\ref{lem:directed detailed balance relations}$(1b)$ to get $\vpi(e) = \vpi(-e)$. This yields
    \begin{nalign}
        \vpi(e) K^n(e,e')  = \vpi(e) K^n(-e',-e) = \vpi(-e) K^n(-e',-e)  = \vpi(e') K^n(-e',-e).
    \end{nalign}
    Similar calculations as in~\eqref{eq:lazyK reversibility case 1} yield
    \begin{align}\label{eq:final lazy K reversibility}
        \vpi(e) K^n_L(e,e') = \vpi(e') K^n_L(-e',-e)
    \end{align}
    for all $e,e' \in \vE$. Thus, $K_L$ satisfies the directed detailed balance also in this case.

    We can write the kernel $\bK$ of $Q$ on the undirected edges as
    \begin{align}\label{eq:qkerneldefinition}
        \bK(\{ e,-e\},\{ e',-e'\}) \coloneqq \frac{1}{2}\sum\limits_{i=0}^\infty \sum\limits_{e_1 \in \{ e,-e\}, e_2 \in \{ e',-e'\}} K_L^i(e_1,e_2) (1-p)^ip.
    \end{align}
    Indeed, between two tracked edges of $Q$, the walk $Y$ behaves as $K_L$. Furthermore, each step of $Y$ is included in $Q$ with probability $p$. Finally, as $Y$ is a lazy walk with parameter $1/2$, every edge of $Y$ that is included in $Q$ is equally likely to be $( e,-e)$ as $ (-e,e)$, allowing to work on the undirected edges.
    
    We now show that $\bK$ is reversible. Remembering that we have $\vpi(e) = \vpi(-e)$ yields $\bpi(\{ e,-e \}) \coloneqq \vpi(e) + \vpi(-e) = 2 \vpi(e)$. We begin by using the definition~\eqref{eq:qkerneldefinition} and by writing out the inner sum to get
    \begin{nalign}
        &\bpi(\{ e,-e \}) \bK(\{ e,-e \},\{ e',-e' \}) \\
        =~& \frac{\bpi(\{ e,-e \})}{2}\sum\limits_{i=0}^\infty \sum\limits_{e_1 \in \{ e,-e\}, e_2 \in \{ e',-e'\}} K_L^i(e_1,e_2) (1-p)^ip \\
        =~& \frac{\bpi(\{ e,-e \})}{2}\sum\limits_{i=0}^\infty (1-p)^ip (K_L^i(e,e') + K_L^i(e,-e') +K_L^i(-e,e') + K_L^i(-e,-e')).
    \end{nalign}
    Then, we can pull $\bpi$ into the sum and apply~\eqref{eq:final lazy K reversibility} to obtain
    \begin{nalign}
        &\frac{\bpi(\{ e,-e \})}{2}\sum\limits_{i=0}^\infty (1-p)^ip (K_L^i(e,e') + K_L^i(e,-e') +K_L^i(-e,e') + K_L^i(-e,-e')) \\
        =~&\sum\limits_{i=0}^\infty (1-p)^ip (\vpi(e)K_L^i(e,e') + \vpi(e)K_L^i(e,-e') +\vpi(e)K_L^i(-e,e') + \vpi(e)K_L^i(-e,-e')) \\
        =~&\sum\limits_{i=0}^\infty (1-p)^ip (\vpi(e')K_L^i(e',e) + \vpi(e')K_L^i(-e',e) +\vpi(e')K_L^i(e',-e) + \vpi(e')K_L^i(-e',-e)).
    \end{nalign}
    Putting the sum back together and using the definition~\eqref{eq:qkerneldefinition} once more yields
    \begin{nalign}
        &\sum\limits_{i=0}^\infty (1-p)^ip (\vpi(e')K_L^i(e',e) + \vpi(e')K_L^i(-e',e) +\vpi(e')K_L^i(e',-e) + \vpi(e')K_L^i(-e',-e)) \\
        =~& \frac{\bpi(\{ e',-e' \})}{2}\sum\limits_{i=0}^\infty (1-p)^ip (K_L^i(e',e) + K_L^i(-e',e) +K_L^i(e',-e) + K_L^i(-e',-e)) \\
        =~&\bpi(\{ e',-e' \}) \bK(\{ e',-e' \},\{ e,-e \})
    \end{nalign}
    which shows that $\bK$ is reversible with regard to $\bpi$.
    
    \textit{Step (ii).} First, observe that $Y$ is the lazy version of $X$ on the directed edges, so their recurrence properties coincide. 
    To show that $Y$ is recurrent if and only if $Q$ is recurrent, 
    we use the fact that $Q$ is defined as a subsampling from $Y$. Let $\{e_0,e_1,\dots\}$ be the edges visited by $Y$. Now suppose that edge $e_i$ is visited infinitely often by $Y$. As every edge of $Y$ is independently included in $Q$ with probability $p>0$, this means that the probability that (the undirected version of) $e_i$ is not visited infinitely often in $Q$ is zero. Thus, whenever $Y$ is recurrent, $Q$ is also recurrent.

    Now, let $\{\be_0,\be_1,\dots\}$ be the undirected edges visited by $Q$, and suppose that edge $\be_i$ is visited infinitely often. This means that the two directed versions of $\be_i$ are also visited infinitely often in $Y$. Indeed, as $Q$ is a subsampling of $Y$, for each occurrence of $\be_i$, one of its two directed versions should appear in $Y$. As both versions appear with equal probability, this means that the probability that at least one of the two undirected versions is not visited infinitely often is zero. Thus, $Y$ is recurrent whenever $Q$ is recurrent. 
    
    \textit{Step (iii).} Every reversible chain corresponds to a network $(V,\vE,(c_e)_{e\in \vE})$ with state space $V$, edge set $\vE$ and edge weights $c_e$ for every $e\in \vE$, see e.g.~\cite{lyons_probability_2017}. $Q$ corresponds to the network $(\bE,F,(\bpi(\be)K(\be,\be'))_{\{ \be,\be' \}\in F})$, where
    \begin{align}
        F \coloneqq \{ \{ \be,\be' \}: \be,\be'\in \bE \text{ s.t. } \bpi(\be)\bK(\be,\be') > 0 \}.
    \end{align}
    Let $\Tilde{Q}$ be the $Q$ from the proof of Theorem 3 in~\cite{hermon_reversibility_2019} with kernel $W$. For all $\be,\be' \in \bE$, $0 < W(\be,\be') \le 1$ and $0 < \bK(\be,\be') \le 1$. Therefore, in both networks (corresponding to $Q$ and $\Tilde{Q}$) all nodes are at distance one. Furthermore, the state spaces of $\tilde{Q}$ and $Q$ are identical, and therefore the corresponding networks consist of the same graph, only the edge weights are different.
    By Lemma~\ref{lem:recurrence equivalence with different conductances}, this implies that $Q$ is recurrent if and only if $\Tilde{Q}$ is recurrent. The proof of Theorem 3 in~\cite{hermon_reversibility_2019} shows that $\Tilde{Q}$ is recurrent if and only if the SRW on $G$ is recurrent, so we obtain that $Q$ is recurrent if and only if the SRW on $G$ is recurrent. \\
    \ \\
    For the case $\alpha = 0$, let $\vP_0$ be the kernel of the edge Markovian representation of the node2vec random walk $X$ with parameters $\alpha = 0$ and $\beta, \gamma > 0$ on the directed edges $\vE$. By condition~\eqref{eq:condition every ball contains cycle}, there exists $M\in \NN$ such that for all $e \in \vE$ we can construct a path $e_1 = e,\ldots,e_l = -e$ of length $l\le M$ with $\vP_0(e_i,e_{i+1})>0$ for all $i \in [l-1]$. Set
    \begin{align}
    p \coloneqq \frac{1}{2(M+1)}\left(\frac{\min(\beta,\gamma)}{\min(\beta,\gamma)+(d-1)\max(\beta,\gamma)}\right)^M.
    \end{align}
    The probability to take such a path $e_1 = e,\ldots,e_l = -e$ is
    \begin{align}
    \prod\limits_{i=1}^{l-1} \vP_0(e_i,e_{i+1}) \ge \left(\frac{\min(\beta,\gamma)}{\min(\beta,\gamma)+(d-1)\max(\beta,\gamma)}\right)^M = 2(M+1)p.
    \end{align}
    This implies
    \begin{align}\label{eq:bigger than min return prob}
        \sum\limits_{i=1}^M \vP_0^i(e,-e) \ge 2p(M+1).
    \end{align}
    for all $e\in \vE$. We define $D \coloneqq \frac{1}{M+1}\sum\limits_{i=0}^M \vP_0^i$, $D_L \coloneqq \frac{1}{2}(I+D)$ its lazy version and $Z = (Z_n)_{n=0}^{\infty}$ the Markov chain corresponding to $D_L$. Using equation~\eqref{eq:bigger than min return prob}, we get $D(e,-e)\ge 2p$ and thus $D_L(e,-e)\ge p$ for all $e\in \vE$. 
    By Lemma 8.1 in~\cite{hermon_reversibility_2019}, $\vP_0$ is transient if and only if $Z$ is transient.
    
    Similar as above, we can interpret $Z$ in the following way: Given $Z_n$ the chain picks a candidate $T^0_{n+1}$ for $Z_{n+1}$ according to $D$ and then flips a fair coin in order to decide if the candidate is accepted. To generate $Z$ and $T^0_1,T^0_2,\ldots$, we define i.i.d. Bernoulli($p$) random variables $\xi_0, \xi_1,\ldots$ such that for $n\ge 0$ we have $T^0_{n+1} = -Z_n$ whenever $\xi_n = 1$ (and also possibly when $\xi_n = 0$). Using equation~\eqref{eq:bigger than min return prob}, the probability that $T^0_{n+1} = -Z_n$ is at least $p$. As before, the proof of the construction of the $\xi$ can be found in proof of Theorem 1 (ii) in~\cite{hermon_reversibility_2019}.
    
    To construct $Q$, fix a directed edge $e \in \vE$. Consider the case where the initial distribution of $Z$ is uniform on $\{ e,-e \}$. Let $\be_0 = \{ e,-e \}$ and $\tau_0=-1$. Inductively, we define $\tau_{n+1}\coloneqq \inf \{ t > \tau_n: \xi_{t-1} = 1 \}$ and $\be_n \coloneqq \{ T^0_{\tau_n+1},-T^0_{\tau_n+1} \}$. Then, the process $Q^0 \coloneqq (\be_n)_{n=0}^\infty$ on the undirected edges $\bE$ is given by $Q^0_n = e_n$ for all $n\ge 0$. We need to show that:
    \begin{itemize}
        \item[(i)] $Q^0$ is a reversible Markov chain with respect to $\bpi(\{ e,-e \}) \coloneqq \vpi(e) + \vpi(-e)$. In other words, its transition kernel $\bK$ satisfies
        \begin{align}
            \bpi(\{ e,-e \}) \bK_0(\{ e,-e \},\{ e',-e' \}) = \bpi(\{ e',-e' \}) \bK_0(\{ e',-e' \},\{ e,-e \})
        \end{align}
        for all $e,e' \in \vE$.
        \item[(ii)] $X$ is recurrent if and only if $Q^0$ is recurrent. 
        \item[(iii)] SRW on $G$ is recurrent if and only if $Q^0$ is recurrent.
    \end{itemize}
    \textit{Step (i).} Consider the transition kernel $K_0$ on $\vE$ given by
    \begin{align}
        K_0(e,e') = \begin{cases}
            (1-p)^{-1} D(e,e'), &\text{if } e \neq -e' \\
            (1-p)^{-1} (D(e,-e)-p), &\text{if } e = -e',
        \end{cases}
    \end{align}
    and its lazy version $K_{0,L}(e,e') = \frac{1}{2}(I(e,e')+K_0(e,e'))$.
    Since $\vP_0$ fulfills~\eqref{eq:antireversibilityedges}, $D$ fulfills it as well. Following the same steps as in the proof of the case $\alpha > 0$, we obtain
    \begin{align}
        \bK_0(\{ e,-e\},\{ e',-e'\}) \coloneqq \frac{1}{2}\sum\limits_{i=0}^\infty \sum\limits_{e_1 \in \{ e,-e\}, e_2 \in \{ e',-e'\}} K_{0,L}^i(e_1,e_2) (1-p)^ip
    \end{align}
    is reversible with regard to $\pi(\{ e,-e \})$.
    
    \textit{Step (ii).} This step is identical to Step (ii) in the proof of the case $\alpha > 0$.
    
    \textit{Step (iii).} This step is identical to Step (iii) in the proof of the case $\alpha > 0$ with the adapted kernel $\bK_0$ and $\tilde{Q}$ the kernel $Q$ from the proof of Theorem 1 (ii) in~\cite{hermon_reversibility_2019}.
\end{proof}

\printbibliography
\end{document}